\documentclass[a4paper]{amsart}

\usepackage[T1]{fontenc}
\usepackage{palatino}
\usepackage[svgnames]{xcolor}
\usepackage[pdftitle={Modest Sets are Equivalent to PERs}, pdfauthor={Rahul Chhabra}]{hyperref}
\hypersetup{
  colorlinks=true,
  urlcolor=blue,
  citecolor=teal,
  linkcolor=teal
}
\usepackage{orcidlink}
\usepackage{tikz}
\usepackage{tikz-cd}
\usepackage{amssymb}
\usepackage{amsthm}
\usepackage[toc,page,title]{appendix}
\usepackage{microtype}
\usepackage{fontawesome5}
\usepackage[marginratio=1:1,height=9in,width=6.5in]{geometry}

% Commands

\newcommand{\Univ}[1]{\ensuremath{\mathcal{#1}}}

\newcommand{\Sets}[1]{\ensuremath{\mathsf{Sets}_{#1}}}
\newcommand{\SetsCat}[1]{\ensuremath{\mathsf{Sets}_{#1}}}

\newcommand{\Pair}[2]{\ensuremath{\langle #1, #2 \rangle}}

\newcommand{\Define}{\ensuremath{:\equiv}}

\newcommand{\PER}{\ensuremath{\mathsf{PER}_{\mathbb{A}}}}
\newcommand{\Asm}{\ensuremath{\mathsf{Asm}_{\mathbb{A}}}}
\newcommand{\Mod}{\ensuremath{\mathsf{Mod}_{\mathbb{A}}}}

\newcommand{\subQuot}[1]{\ensuremath{\mathsf{SubQuotient}(#1)}}
\newcommand{\canonicalPER}[1]{\ensuremath{\mathsf{CanonicalPER}(#1)}}

\newcommand{\PCA}{\ensuremath{\mathbb{A}}}
\newcommand{\Prop}[1]{\ensuremath{\mathsf{Prop}_{#1}}}
\newcommand{\Realizer}[1]{\ensuremath{\mathtt{#1}}}
\newcommand{\Reasoning}[1]{\{ #1 \}}
\newcommand{\id}{\ensuremath{\mathsf{id}}}
\newcommand{\Dom}[1]{\ensuremath{\mathsf{Dom}(#1)}}
\newcommand{\formalisation}[1]{\raisebox{-0.5pt}{\scalebox{0.8}{\href{#1}{\faCog}}}}
\newcommand{\fwd}{\ensuremath{\mathsf{forward}}}
\newcommand{\fwdMain}{\ensuremath{\mathsf{forwardMainMap}}}
\newcommand{\bwd}{\ensuremath{\mathsf{backward}}}
\newcommand{\bwdMain}{\ensuremath{\mathsf{backwardMainMap}}}
\newcommand{\propTrunc}[1]{\ensuremath{\parallel #1 \parallel_{-1}}}
\newcommand{\ptInc}[1]{\ensuremath{| #1 |_{-1}}}
\newcommand{\reasoning}[1]{\{ #1 \}}
\newcommand{\Triple}[3]{\ensuremath{\langle #1, #2, #3 \rangle}}
\newcommand{\perify}[1]{\ensuremath{\mathsf{perify}(#1)}}

% Theorems
\theoremstyle{definition}
\newtheorem{theorem}{Theorem}[section]
\newtheorem{lemma}[theorem]{Lemma}
\newtheorem{corollary}[theorem]{Corollary}
\newtheorem{intuition}[theorem]{Intuition}
\newtheorem{definition}[theorem]{Definition}

\newtheorem{remark}[theorem]{Remark}

\newtheorem{construction}[theorem]{Construction}

\newtheorem{notation}[theorem]{Notation}
\newtheorem{convention}[theorem]{Convention}

\setcounter{secnumdepth}{4}

\usepackage[style=alphabetic]{biblatex}
\addbibresource{main.bib}
% I like small bibliography

\begin{document}
\title{Modest Sets are Equivalent to PERs}
\author{
  Rahul Chhabra \orcidlink{0009-0007-7917-6461}
}
\date{\today}

\begin{abstract}
  The aim of this article is to give an expository account of the equivalence between modest sets and partial equivalence relations. Our proof is entirely self-contained in that we do not assume any knowledge of categorical realizability. At the heart of the equivalence lies the \emph{subquotient construction} on a partial equivalence relation. The subquotient construction embeds the category of partial equivalence relations into the category of modest sets. We show that this embedding is a split essentially surjective functor, and thereby, an equivalence of categories. Our development is both constructive and predicative, and employs the language of \emph{homotopy type theory}. All the mathematics presented in this article has been mechanised in Cubical Agda.
\end{abstract}
\maketitle
% \tableofcontents
% \clearpage

\section{Introduction}
\label{sec:introduction}

Kleene, in his seminal work \cite{kleene-number-realizability} developed the technique of ``recursive realizability'' to show how to give a \emph{computational} meaning to the logic of Brouwer's intuitionism. The realizability relationship is written $\Realizer{a} \Vdash x$ and is read ``\Realizer{a} realizes $x$'' or ``\Realizer{a} is a realizer of $x$''. We want to think of the realizability relationship as capturing the intuition that \Realizer{a} is some kind of ``machine code representation'' of a mathematical object $x$.

The discipline of categorical realizability emerged with Martin Hyland's discovery of the effective topos \cite{hyland-effective-topos}. Categorical realizability studies \emph{realizability toposes} (such as the effective topos) and important subcategories thereof through the lens of categorical logic and topos theory.

The effective topos is the ``world of recursive realizability''. It massively generalises Kleene's original realizability interpretation from Heyting arithmetic to topos logic. Additionally, it serves as a semantic playground for studying computability theory \cite{bauer-synthetic-computability-theory} and domain theory ``synthetically'' \cite{hyland-synthetic-domain-theory}.

% The category of partial equivalence relations is an internal category in the effective topos, while the category of modest sets is a full subcategory of the effective topos that captures the intuitive notion of a ``data type''.

Recall that a theorem due to Freyd \cite{freyd-abelian-categories} tells us that any small complete category in \SetsCat{} is necessarily posetal. This result extends to all Grothendieck toposes, so that, in any Grothendieck topos, a small complete category is necessarily posetal \cite{shulman-small-complete-category-grothendieck} \cite{gubkin-freyds-theorem-grothendieck-topos}. The category of partial equivalence relations (when arranged as an internal category in the effective topos), however, \emph{is} a non-posetal small complete category \cite{hyland-small-complete-category}. This fact is notable since it clearly highlights how realizability toposes differ from Grothendieck toposes. That this category is small and complete is also useful in the semantics of the polymorphic $\lambda$-calculus \cite{longo-moggi-omega-set}.

%This equivalence helps to construct an impredicative universe in the category of assemblies that classifies uniform families of modest sets. In particular, we can find a \emph{generic} uniform family $G$ such that any family is obtained by pulling back $G$. It is easy to make such a $G$ for PERs, but the same $G$ also classifies families of modest sets!
%\begin{lemma}[Lemma 6.4, \cite{streicher-notes}]\label{lemma:streicher}
%  We can find a generic family of modest sets, that is, we can find a generic object in the fibration of uniform families of modest sets indexed by assemblies.
%\end{lemma}
% While we do not prove this lemma here, one should be able to fill in the details of the proof given by Streicher after reading this article.

Both modest sets and partial equivalence relations capture the intuitive notion of a ``data type''. It is fitting that they are equivalent.
The aim of this article is to give a detailed and self-contained proof of the equivalence. Obviously, this fact is well-known to experts. It already shows up in Hyland's articles on the effective topos \cite[Proposition 7.2]{hyland-effective-topos} and on PERs \cite[p.~151]{hyland-small-complete-category}. I claim no originality for the mathematics presented here. Mistakes and opinions are my own.

Our development is completely \textbf{constructive} in that we do not assume the Law of Excluded Middle (LEM) \cite[Section 3.4]{hott-book} or the Axiom of Choice (AC) \cite[Section 3.8]{hott-book}. It is also \textbf{predicative} in that we do not assume the axiom of propositional resizing \cite[Axiom 3.5.5]{hott-book}. That said, experience suggests that any interesting applications of this equivalence require us to assume propositional resizing.

\subsection*{Acknowledgments}
I have benefited greatly from many insightful discussions with Jon Sterling. Additionally, Jon explained to me the main idea behind the construction of the backwards direction of the isomorphism in Section~\ref{sec:backwards-direction}. Tom de Jong and Ian Ray gave me feedback on an earlier draft of this article. Tom also taught me a LaTeX hack to get the \formalisation{} symbol to behave well. I am grateful to them for the support and time they have given me.

\section{Univalent Foundations}
\label{sec:univalent-foundations}

The aim of this section is to establish notation and conventions, not to give a comprehensive review of univalent foundations. Readers unfamiliar with HoTT/UF should consult the HoTT Book \cite{hott-book}. We will work in homotopy type theory \cite{hott-book}, an extension of Martin-L{\"o}f's intensional type theory with the \textbf{univalence axiom} and \textbf{higher inductive types}. More explicitly, we work in Martin-L{\"o}f's intensional type theory with the empty type $\mathbf{0}$, the unit type $\mathbf{1}$, dependent products $(a : A) \times B(a)$, dependent functions $(a : A) \to B(a)$, identity/path types $a =_A b$, inductive types, and a hierarchy of universes $\Univ{U}_0 : \Univ{U}_1 : \Univ{U}_2 : \dots$. When the type $A$ in a path type $a =_A b$ is obvious, we omit it. We use the symbol $\doteq$ for definitional or judgemental equality. Note that we prefer to use the Nuprl/Agda notation for dependent products $(a : A) \times B(a)$ and dependent functions $(a : A) \to B(a)$, as opposed to the HoTT Book's $\sum_{a : A} B(a)$ and $\prod_{a : A} B(a)$ respectively.

We assume familiarity with the concept of $n$-types. We call $-1$-types the \textbf{propositions} and $0$-types the \textbf{sets}. The identity types of sets have no structure; as such, when $A$ is a set and we want to talk about the identity type of $A$, we will generally not bother with witnesses and simply say that two elements are ``equal''. 
For a universe \Univ{U}, we write $\Prop{\Univ{U}}$ for the collection of propositions in \Univ{U}, and $\Sets{\Univ{U}}$ for the collection of sets in \Univ{U}.

We use only two higher inductive types --- propositional truncation and set quotient. We write $A / R$ for the set quotient of the type $A$ by the relation $R : A \to A \to \Univ{U}$ and $[\_] : A \to A / R$ for the surjective inclusion of $A$ into $A / R$. We write $\propTrunc{A}$ for the propositional truncation (i.e., $-1$-truncation) of $A$ and $\ptInc{\_} : A \to \propTrunc{A}$ for the inclusion of $A$ into $\propTrunc{A}$. We assume working knowledge of the elimination principles of propositional truncation and the set quotient.

I wish to recall how we can eliminate propositional truncations into sets. It can be shown that functions $\propTrunc{A} \to B$ are exactly the coherently constant functions $A \xrightarrow{\omega} B$ \cite{kraus-coherently-constant}. When $B$ is a set, the coherently constant functions ${A} \xrightarrow{\omega} B$ are exactly the functions that are na\"ively constant, that is, functions $f : A \to B$ such that for any $a_1 , a_2 : A$, $f(a_1)$ equals $f(a_2)$. This gives us a simple way to define functions from a proposition truncation $\propTrunc{A}$ into a set $B$.

For the most part, we use the conventions of the HoTT Book \cite{hott-book}. We differ from the HoTT Book in our usage of the words ``exist'' and ``category''. We use the word ``existence'' where the HoTT Book would use ``mere existence'', and we use the word ``category'' where the HoTT Book uses ``precategory''.

We shall assume that the reader is familiar with the ideas of Part I of the HoTT Book \cite[Part I]{hott-book} and with the HoTT Book's treatment of category theory \cite[Chapter 9]{hott-book}.

\subsection{Agda Formalisation}

One of the advantages of univalent type theory is that the verification of large amounts of mathematics becomes practical. Cubical Agda \cite{cubical-agda} is a proof assistant based on the cubical sets interpretation of homotopy type theory \cite{cchm}. \sloppy All the mathematics presented in this article has been verified in Cubical Agda. Since our development is completely constructive and predicative, Cubical Agda can actually run our proofs as computer programs! This is an advantage of Cubical Agda over vanilla Agda with univalence assumed as an axiom.

 A Git repository of the Agda code is available at \url{https://github.com/rahulc29/realizability/}. An HTML rendering can also be found at \url{https://rahulc29.github.io/realizability/}. Throughout the article, certain definitions, theorems and lemmas are accompanied by a \formalisation{} icon that links to the corresponding item in the formal development. 

\section{Combinatory Algebras, Assemblies, and Modest Sets}

\subsection{Combinatory Algebras}
\label{sec:combinatory-algebras}

While Kleene's recursive realizability uses natural numbers, there is a large class of structures that give a notion of realizer. The one we focus on is the class of \emph{combinatory algebras}.

%\begin{intuition}\label{intuition:pca-fp-lang}
%  It is instructive to imagine a partial combinatory algebra as an untyped functional programming language.
% \end{intuition}

\begin{definition}[\formalisation{https://rahulc29.github.io/realizability/Realizability.ApplicativeStructure.html\#1414}]
  An \textbf{applicative structure} on a set \PCA~ consists of an ``application'' function $\mathsf{appl} : \PCA \to \PCA \to \PCA$. We use juxtaposition to denote the application function, that is, we write \Realizer{a \; b} for $\mathsf{appl}(\Realizer{a}, \Realizer{b})$.
\end{definition}

Notice that our definition of applicative structure calls for a \emph{total} application function. In much of categorical realizability, one works with \emph{partial} applicative structures where the application function is partial. We ignore issues of partiality here by sticking to total applicative structures and total combinatory algebras, while mentioning that our results hold for the partial case as well.

\begin{remark}
  It is not particularly difficult to deal with partiality using the standard techniques for talking about partiality in univalent mathematics \cite{knapp-partiality}. However, in Agda, it does lead to a large number of bureaucratic proof obligations where we have to justify that the realizer we want to talk about is defined. For this reason, we have favoured total combinatory algebras.
\end{remark}

\begin{definition}[\formalisation{https://rahulc29.github.io/realizability/Realizability.CombinatoryAlgebra.html\#296}]
  A (total) \textbf{combinatory algebra} structure on an applicative structure $\Pair{\PCA}{\mathsf{appl}}$ consists of a choice of combinators \Realizer{s} and \Realizer{k} such that
  \begin{itemize}
  \item For any $\Realizer{a}, \Realizer{b}, \Realizer{c} : \PCA$, we have that $\Realizer{s \; a \; b \; c} = \Realizer{a \; c \; (b \; c)}$ holds.
    \item For any $\Realizer{a}, \Realizer{b} : \PCA$, we have that $\Realizer{k \; a \; b} = \Realizer{a}$ holds.
  \end{itemize}
\end{definition}

The definition of a combinatory algebra is a deceptively simple definition, and it is not obvious how such a structure captures our intuitive notion of realizer. However, we can derive from the combinators \Realizer{s} and \Realizer{k} the following features :
\begin{itemize}
\item An identity combinator --- a combinator \Realizer{i} such that for any \Realizer{a} : \PCA~ we have that $\Realizer{i \; a} = \Realizer{a}$ holds.
\item A pairing combinator \Realizer{pair} along with projection combinators \Realizer{pr1} and \Realizer{pr2} such that for any \Realizer{a} and \Realizer{b} in \PCA~ we have that $\Realizer{pr1 \; (pair \; a \; b)} = \Realizer{a}$ and $\Realizer{pr2 \; (pair \; a \; b)} = \Realizer{b}$ hold.
\item An encoding of the Booleans \Realizer{true} and \Realizer{false} along with an \Realizer{ifThenElse} combinator such that for any \Realizer{t} and \Realizer{e} in \PCA~ we have that $\Realizer{ifThenElse \; true \; t \; e} = \Realizer{t}$ and $\Realizer{ifThenElse \; false \; t \; e} = \Realizer{e}$ hold.
  \item A fixedpoint combinator \Realizer{Y} such that for any \Realizer{f} : \PCA~ we have that $\Realizer{f \; (Y \; f)} = \Realizer{Y \; f}$ holds.
  \item An encoding of the natural numbers known as the \textbf{Curry numerals} along with a primitive recursion combinator \Realizer{primRec}.
  \item An encoding of the partial recursive functions.
  \item An analogue of $\lambda$-abstraction and $\beta$-reduction\footnote{In the presence of partiality, $\beta$-reduction is not well-behaved \cite[Theorem 1.1.9]{longley-phd-thesis}.} that we write $\mathtt{\backslash \; x \to f}$.
\end{itemize}

It is these combinators that allow us to think of a combinatory algebra as a simple, untyped (low-level) functional programming language. For this reason, we use a $\mathtt{monospace}$ font for elements of \PCA. The actual derivations of these combinators may be found in, say, van Oosten's textbook \cite[Section 1.1.1, Section 1.3.1]{van-oosten-textbook}.

\subsection{Assemblies and Modest Sets}

From this point onwards, we will assume that we have a universe \Univ{U} in which \PCA~ lives. Unless stated otherwise, whenever we speak of a set or a proposition, we always mean a \Univ{U}-small set and a \Univ{U}-small proposition respectively.

\begin{definition}[\formalisation{https://rahulc29.github.io/realizability/Realizability.Assembly.Base.html\#580}]
  An \textbf{assembly} on a carrier set $X$ consists of a \textbf{realizability relation} $\_\Vdash_X\_ : \PCA \to X \to \Prop{}$ such that for any $x : X$, there exists a realizer. That is, it is a triple $\Triple{X}{\_\Vdash_X\_}{e_X}$ where $X$ is a set, $\_\Vdash_X\_$ is the realizability relation and, for any $x : X$, $e_X(x)$ is evidence that $x$ is realized.
\end{definition}

\begin{notation}
  When the assembly is obvious, we will write $e_{\Realizer{r}}[x]$ for evidence of the proposition $\Realizer{r} \Vdash x$.
\end{notation}

When $\Realizer{a} \Vdash_X x$ holds we say that ``\Realizer{a} realizes $x$'' or ``\Realizer{a} is a realizer of $x$''. Intuitively, an assembly is just a set equipped with an ``encoding'' of it's elements in \PCA. Notice that we can present the data of an assembly on a carrier $A$ as a function $E_A : A \to \mathcal{P}^+(\PCA)$ where $\mathcal{P}^+(\PCA)$ is the ``inhabited powerset'' of \PCA, the set of all subsets of \PCA~ that are inhabited. For any $a : A$, $E_A(a)$ is the set of all realizers of $a$. 

Assemblies form a category with an obvious notion of map between assemblies $\Triple{A}{\_\Vdash_A\_}{e_A}$ and $\Triple{B}{\_\Vdash_B\_}{e_B}$ --- it is a map $f : A \to B$ for which there exists an element \Realizer{t} in \PCA~ that can compute a correct encoding for $f(x)$ given an encoding of $x$. Such a \Realizer{t} is known as a tracker for $f$.

\begin{definition}[\formalisation{https://rahulc29.github.io/realizability/Realizability.Assembly.Morphism.html\#943}]
  For assemblies $\Triple{A}{\_\Vdash_A\_}{e_A}$ and $\Triple{B}{\_\Vdash_B\_}{e_B}$, and a map $f : A \to B$, a \textbf{tracker} for $f$ is an element \Realizer{t} such that whenever \Realizer{a} realizes $a : A$, we have that \Realizer{t \; a} realizes $f(a) : B$.
\end{definition}

\begin{definition}[\formalisation{https://rahulc29.github.io/realizability/Realizability.Assembly.Morphism.html\#1292}]
  For assemblies $\Triple{A}{\_\Vdash_A\_}{e_A}$ and $\Triple{B}{\_\Vdash_B\_}{e_B}$, an \textbf{assembly morphism} is a map $f : A \to B$ such that there exists a tracker for $f$. That is, an assembly morphism between $\Triple{A}{\_\Vdash_A\_}{e_A}$ and $\Triple{B}{\_\Vdash_B\_}{e_B}$ is a pair \Pair{f}{e_f} where $f : A \to B$ is a map and $e_f$ is evidence that a tracker for $f$ exists.
\end{definition}

The following observation is trivial but useful:

\begin{lemma}[Assembly Morphism Extensionality, \formalisation{https://rahulc29.github.io/realizability/Realizability.Assembly.Morphism.html\#2435}]\label{lemma:assembly-morphism-extensionality}
  For assemblies $\Triple{A}{\_\Vdash_A\_}{e_A}$ and $\Triple{B}{\_\Vdash_B\_}{e_B}$ and assembly morphisms $\Pair{f}{e_f} , \Pair{g}{e_g} : \Triple{A}{\_\Vdash_A\_}{e_A} \to \Triple{B}{\_\Vdash_B\_}{e_B}$, the identity type of assembly morphisms $\Pair{f}{e_f} = \Pair{g}{e_g}$ is equivalent to the identity type of the underlying functions $f = g$, which is further equivalent to the type $(x : A) \to f(x) = g(x)$.
\end{lemma}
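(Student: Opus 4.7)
The plan is to reduce the equality of assembly morphisms to a pair of standard facts about identity types in univalent type theory: characterisation of identity in $\Sigma$-types whose second component is a proposition, and function extensionality.

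First I would unfold the definition of an assembly morphism. A morphism from $\Triple{A}{\_\Vdash_A\_}{e_A}$ to $\Triple{B}{\_\Vdash_B\_}{e_B}$ is by definition a pair $\Pair{f}{e_f}$ where $f : A \to B$ and $e_f$ witnesses that a tracker for $f$ \emph{exists} (in the HoTT Book sense of mere existence, i.e.\ propositionally truncated existence). The crucial observation is that the type of the second component, namely $\propTrunc{(\Realizer{t} : \PCA) \times ((a : A)(\Realizer{a} : \PCA) \to \Realizer{a} \Vdash_A a \to \Realizer{t \; a} \Vdash_B f(a))}$, is a proposition for every $f$, since it is a propositional truncation.

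Next, I would invoke the standard lemma that, for a type family $P : X \to \Univ{U}$ valued in propositions, the projection $(x : X) \times P(x) \to X$ is an embedding; equivalently, for any two elements $\Pair{x_1}{p_1}, \Pair{x_2}{p_2}$ of $(x : X) \times P(x)$, the canonical map $(\Pair{x_1}{p_1} = \Pair{x_2}{p_2}) \to (x_1 = x_2)$ is an equivalence. Applying this with $X$ the function type $A \to B$ and $P(f)$ the proposition ``a tracker for $f$ exists'' yields that $\Pair{f}{e_f} = \Pair{g}{e_g}$ is equivalent to $f = g$.

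Finally, I would apply function extensionality to the identity type $f = g$ of functions $A \to B$ to obtain the further equivalence with $(x : A) \to f(x) = g(x)$. Composing the two equivalences gives the desired chain. There is no real obstacle here: the proof is essentially bookkeeping, and the only point worth emphasising is that ``there exists a tracker'' is, by our conventions fixed in Section~\ref{sec:univalent-foundations}, the propositionally-truncated existential, which is what makes the second component of the $\Sigma$-type a proposition.
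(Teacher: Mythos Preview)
Your proposal is correct and is exactly the standard argument one would expect; the paper itself gives no proof at all, introducing the lemma only as a ``trivial but useful'' observation. Your unpacking of the $\Sigma$-type with propositional second component, followed by function extensionality, is precisely how the formalised version proceeds.
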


\begin{lemma}[\formalisation{https://rahulc29.github.io/realizability/Realizability.Assembly.Morphism.html\#5604}]
  Assemblies equipped with assembly morphisms and the obvious notion of identity and composition form a category \Asm.
\end{lemma}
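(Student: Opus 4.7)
The plan is to unpack what the ``obvious notion of identity and composition'' must be, verify that these are well-defined assembly morphisms, and then reduce the category axioms to statements about the underlying functions via Lemma~\ref{lemma:assembly-morphism-extensionality}.

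First I would define the identity morphism on $\Triple{A}{\_\Vdash_A\_}{e_A}$ to be the underlying function $\id_A : A \to A$ paired with the evidence that a tracker exists, witnessed by the identity combinator $\Realizer{i}$: if $\Realizer{a} \Vdash_A x$, then $\Realizer{i \; a} = \Realizer{a} \Vdash_A \id_A(x)$. For composition of $\Pair{f}{e_f} : A \to B$ and $\Pair{g}{e_g} : B \to C$, the underlying function is the set-theoretic composite $g \circ f$. To produce evidence that a tracker exists, I would eliminate the propositional truncations $e_f$ and $e_g$ simultaneously into the proposition ``a tracker for $g \circ f$ exists''; given actual trackers $\Realizer{t_f}$ and $\Realizer{t_g}$, the combinator $\mathtt{\backslash \; x \to t_g \; (t_f \; x)}$ is a tracker, since whenever $\Realizer{a} \Vdash_A x$ we have $\Realizer{t_f \; a} \Vdash_B f(x)$ and hence $\Realizer{t_g \; (t_f \; a)} \Vdash_C g(f(x))$.

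Next I would verify the category laws. By Lemma~\ref{lemma:assembly-morphism-extensionality}, equality of assembly morphisms is equivalent to pointwise equality of underlying functions, so the left identity law $\id_B \circ \Pair{f}{e_f} = \Pair{f}{e_f}$, the right identity law, and associativity all reduce immediately to the corresponding identities for plain functions between sets, which hold judgementally.

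Finally I would check that the data really does form a category in the HoTT Book sense, i.e.\ that each hom-type is a set. An assembly morphism from $A$ to $B$ is a $\Sigma$-type of a function $f : A \to B$ (which is a set, since $B$ is a set and the set types are closed under dependent function types) and a proof of the proposition asserting existence of a tracker (which is a set because propositions are sets); so the total space is a set. The main bit of care in the whole argument is in the composition step, where one must eliminate a pair of propositional truncations into a proposition --- once this is set up correctly everything else is formal.
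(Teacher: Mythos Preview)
Your proposal is correct and spells out exactly the ``obvious'' construction the paper gestures at; the paper itself gives no proof of this lemma beyond the word ``obvious'' and a link to the formalisation. Your use of Lemma~\ref{lemma:assembly-morphism-extensionality} to reduce the category laws to pointwise equalities, and your care in eliminating the truncated tracker witnesses into a proposition when defining composition, are precisely the details one would expect the formalisation to contain.
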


Among other things, the category \Asm~ has finite limits, finite colimits, is (locally) cartesian closed, regular (but not exact), and has a classifier for regular subobjects. It is ``nearly a topos'' --- it is a \emph{quasitopos}. Additionally, it is equivalent to the category of double-negation separated objects of the realizability topos. 

\begin{notation}
In this article, we rarely deal with different assemblies on the same carrier. As such, we shall abuse notation and write ``let $A$ be an assembly'' to mean ``let $\Triple{A}{\_\Vdash_A\_}{e_A}$ be an assembly on $A$''. Similarly, we shall write ``let $f : A \to B$ be an assembly morphism'' to mean ``let $\Pair{f}{e_f}$ be an assembly morphism from $\Triple{A}{\_\Vdash_A\_}{e_A}$ to $\Triple{B}{\_\Vdash_B\_}{e_B}$''.
\end{notation}

Since the realizability relation can be more or less arbitrary, it is possible for elements to \emph{share} realizers. Indeed, we can take any set $X$ and define a realizability relation on $X$ by saying that every $\Realizer{a} : \PCA$ realizes every $x : X$. This gives us the \textbf{codiscrete}\footnote{To elaborate, we have a functor $\nabla : \Sets{} \to \Asm$ that is right adjoint to the global sections functor, thus the name ``codiscrete''.} assembly on $X$.

This can be undesirable, partly because it violates the ``programmer's intuition'' of a data type. In a data type, one realizer should realize (at most) one element. To remedy this, we can impose the condition of modesty on an assembly:

\begin{definition}[\formalisation{https://rahulc29.github.io/realizability/Realizability.Modest.Base.html\#1057}]\label{def:modesty}
  An assembly $A$ is said to be \textbf{modest} if, for any elements $x : A$ and $y : A$, whenever there exists a realizer $\Realizer{a} : \PCA$ that realizes both $x$ and $y$, $x$ and $y$ are equal. More formally,
  \begin{align*}
    &\mathsf{isModest} : \Asm \to \Prop{} \\
    &\mathsf{isModest} \; \Triple{A}{\_\Vdash_A\_}{e_A} \; \Define \forall \; (x \; y : A) \to \exists (\Realizer{a} : \PCA) \; \Realizer{a} \Vdash_A x \times \Realizer{a} \Vdash_A y \to x =_A y
  \end{align*}
\end{definition}

In other words, elements do not share realizers. Notice that since the carrier of an assembly is a set, being modest is a property and not structure. Modest assemblies are also known as modest sets, and that is also what we will prefer to call them. Modest sets appeared in Hyland's original paper on the effective topos \cite{hyland-effective-topos} under the name ``strictly effective objects''. The terminological change to ``modest sets'' was suggested by Dana Scott.

\begin{remark}
  In the literature, one sometimes find a definition of modesty that states that ``for any $x : A$ and $y : A$ if $E_A(a) \cap E_A(b) \neq \emptyset$ then $a$ equals $b$''. This is a \emph{negative} formulation of the same statement we have given in [Definition~\ref{def:modesty}]. The two definitions are classically equivalent, but, in a constructive setting, our \emph{positive} formulation is easier to work with.
\end{remark}

Modest sets form a full subcategory of the category of assemblies that we shall denote as \Mod. Despite being smaller than \Asm, it has all finite limits and colimits, is (locally) cartesian closed and regular. Additionally, it is an \textbf{exponential ideal} in \Asm, so that if $Y$ is modest and $X$ is an assembly then the exponential object $Y^X$ is also modest.

\section{Partial Equivalence Relations}
\label{sec:partial-equivalence-relations}

In this section, we look at the category of partial equivalence relations in some detail.

\begin{definition}[\PER, \formalisation{https://rahulc29.github.io/realizability/Realizability.PERs.PER.html\#1618}]\label{def:per}
  To give a \textbf{partial equivalence relation} is to give a function $R : \PCA \to \PCA \to \Prop{\Univ{U}}$ along with
  \begin{enumerate}
  \item Evidence of \textbf{symmetry}, that is, evidence that for any $\Realizer{a} : \PCA$ and $\Realizer{b} : \PCA$, if $R(\Realizer{a},\Realizer{b})$ holds then $R(\Realizer{b},\Realizer{a})$ holds.
  \item Evidence of \textbf{transitivity}, that is, evidence that for any $\Realizer{a} \; \Realizer{b} \; \Realizer{c} : \PCA$ if $R(\Realizer{a},\Realizer{b})$ and $R(\Realizer{b},\Realizer{c})$ hold then $R(\Realizer{a},\Realizer{c})$ holds.
  \end{enumerate}
\end{definition}

\begin{convention}
  We shall use the word ``per'' to refer to a term of type \PER~ [Definition~\ref{def:per}].
\end{convention}

\begin{notation}
  We write $\Realizer{a} \approx_R \Realizer{b}$ for $R(\Realizer{a}, \Realizer{b})$. For instance, we shall write ``$\Realizer{i \; a} \approx_R \Realizer{i \; a}$ holds iff $\Realizer{a} \approx_R \Realizer{a}$ holds'' instead of ``$R(\Realizer{i \; a}, \Realizer{i \; a})$ holds iff $R(\Realizer{a}, \Realizer{a})$ holds''.
\end{notation}

\begin{definition}[Domain of a per, \formalisation{https://rahulc29.github.io/realizability/Realizability.PERs.SubQuotient.html\#1706}]\label{def:domain-per}
  The \textbf{domain} of a per $R$ is the type of all $\Realizer{a} :\PCA$ such that $\Realizer{a} \approx_R \Realizer{a}$ holds. For a per $R$, we write this type as \Dom{R}. 
\end{definition}

\begin{notation}
  We generally write elements of \Dom{R} as pairs \Pair{\Realizer{r}}{r_{\Realizer{r}}} where \Realizer{r} : \PCA~ is a realizer and $r_{\Realizer{r}} : \Realizer{r} \approx_R \Realizer{r}$ witnesses that \Realizer{r} is related to itself.
\end{notation}

\begin{intuition}
  We want to think about a per $R$ as encoding a \textbf{data type}. Realizers \Realizer{a} and \Realizer{b} that are in the domain of $R$ should be thought of as representing terms of the data type, and they represent the same term exactly when $\Realizer{a} \approx_R \Realizer{b}$ holds.
\end{intuition}

\subsection{Morphisms of \PER}
\label{sec:morphisms-of-pers}

Our terminology surrounding morphisms in \PER~ is taken from Phoa's notes \cite{phoa-lecture-notes}.

\begin{definition}[\formalisation{https://rahulc29.github.io/realizability/Realizability.PERs.PER.html\#3708}]\label{def:tracker}
  For pers $R$ and $S$ a \textbf{tracker} is a realizer $\Realizer{t} : \PCA$ that maps ``related elements to related elements''. Concretely, for any realizers $\Realizer{a} : \PCA$ and $\Realizer{b} : \PCA$ for which $\Realizer{a} \approx_R \Realizer{b}$ holds, we have that $\Realizer{t \; a} \approx_S \Realizer{t \; b}$ holds.
\end{definition}

\begin{intuition}
  Notwithstanding we have not yet defined a morphism of pers, a tracker \Realizer{t} between pers $R$ and $S$ should be thought of as a concrete \textbf{representation} of an actual map between $R$ and $S$.
\end{intuition}

\begin{definition}[\formalisation{https://rahulc29.github.io/realizability/Realizability.PERs.PER.html\#3890}]\label{def:equivalence-of-trackers}
  Two trackers \Realizer{t} and \Realizer{u} between pers $R$ and $S$ are \textbf{equivalent} if they map every realizer in the domain of $R$ to related elements in $S$. In other words, for any \Realizer{r} such that $\Realizer{r} \approx_R \Realizer{r}$ holds, we have that $\Realizer{t \; r} \approx_S \Realizer{u \; r}$ holds.
\end{definition}

\begin{lemma}[\formalisation{https://rahulc29.github.io/realizability/Realizability.PERs.PER.html\#4044}]
  Equivalence of trackers [Definition~\ref{def:equivalence-of-trackers}] is an equivalence relation.
\end{lemma}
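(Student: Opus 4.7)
The plan is to unfold the definition of equivalence of trackers and check each of the three properties (reflexivity, symmetry, transitivity) by appealing to the corresponding properties of $S$, together with the tracking property of the trackers involved. Fix pers $R$ and $S$ throughout.

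For \textbf{reflexivity}, given a tracker $\Realizer{t}$, I need to show that for every $\Realizer{r} : \PCA$ with $\Realizer{r} \approx_R \Realizer{r}$ the relation $\Realizer{t \; r} \approx_S \Realizer{t \; r}$ holds. This is immediate from the fact that $\Realizer{t}$ is a tracker [Definition~\ref{def:tracker}] applied to the witness $\Realizer{r} \approx_R \Realizer{r}$. For \textbf{symmetry}, if $\Realizer{t}$ and $\Realizer{u}$ are trackers with $\Realizer{t \; r} \approx_S \Realizer{u \; r}$ for every $\Realizer{r} \in \Dom{R}$, then the symmetry clause of $S$ [Definition~\ref{def:per}] yields $\Realizer{u \; r} \approx_S \Realizer{t \; r}$ for every such $\Realizer{r}$. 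For \textbf{transitivity}, if $\Realizer{t \; r} \approx_S \Realizer{u \; r}$ and $\Realizer{u \; r} \approx_S \Realizer{v \; r}$ hold for every $\Realizer{r} \in \Dom{R}$, the transitivity clause of $S$ gives $\Realizer{t \; r} \approx_S \Realizer{v \; r}$.

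There is no real obstacle here; the proof is essentially a pointwise lift of the per structure on $S$ along the domain of $R$, with reflexivity being the only step that uses the tracker hypothesis nontrivially. The only minor bookkeeping point is that the equivalence relation lives on trackers (i.e., realizers together with their tracking property), so I would take care to define it, strictly speaking, as a relation on the underlying realizers and observe that whenever we are given a tracker we may recover $\Realizer{t \; r} \approx_S \Realizer{t \; r}$ from the tracking property rather than from $S$'s reflexivity, which is not available since pers need not be reflexive.
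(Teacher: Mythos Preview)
Your proof is correct and follows essentially the same approach as the paper: reflexivity comes from unfolding the tracker property, while symmetry and transitivity are inherited pointwise from $S$. Your additional remark that reflexivity must use the tracking hypothesis (since $S$ need not be reflexive) is a nice clarification, but the argument is otherwise identical in spirit and detail.
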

\begin{proof}
  Reflexivity follows immediately by unfolding the definitions of tracker and equivalence of trackers. Symmetry and transitivity follow from the symmetry and transitivity of $S$.
\end{proof}

\begin{definition}[\formalisation{https://rahulc29.github.io/realizability/Realizability.PERs.PER.html\#4954}]\label{def:per-morphism}
  A \textbf{morphism of pers} from $R$ to $S$ is an equivalence class of trackers from $R$ to $S$ under equivalence of trackers.
\end{definition}

We can find identity and composites for these morphisms.

\begin{definition}[Identity Tracker, \formalisation{https://rahulc29.github.io/realizability/Realizability.PERs.PER.html\#5321}]\label{def:identity-tracker}
  The \textbf{identity tracker} of a per $R$ is the identity combinator \Realizer{i} of the combinatory algebra \PCA~ interpreted as a tracker.
\end{definition}
\begin{proof}
  Consider \Realizer{a} and \Realizer{b} in \PCA~ such that $\Realizer{a} \approx_R \Realizer{b}$ holds. We need to show $\Realizer{i \; a} \approx_R \Realizer{i \; b}$ holds. This follows immediately since $\Realizer{i \; a} = \Realizer{a}$ and $\Realizer{i \; b} = \Realizer{b}$ hold.
\end{proof}

\begin{definition}[Composite Tracker, \formalisation{https://rahulc29.github.io/realizability/Realizability.PERs.PER.html\#5477}]\label{def:composite-tracker}
  For trackers \Realizer{t} between pers $R$ and $S$ and \Realizer{u} between $S$ and $T$ the \textbf{composite} of \Realizer{t} and \Realizer{u} is given by \Realizer{\backslash x \to u \; (t \; x)}.
\end{definition}
\begin{proof}
  \sloppy Consider \Realizer{a} and \Realizer{b} in \PCA~ such that $\Realizer{a} \approx_R \Realizer{b}$ holds. We need to show that $\Realizer{(\backslash x \to u \; (t \; x)) \; a} \approx_T \Realizer{(\backslash x \to u \; (t \; x)) \; b}$ holds.

  A simple calculation
  \begin{align*}
    &\Realizer{a} \approx_R \Realizer{b} \\
    &\Reasoning{\Realizer{t}\text{ is a tracker}} \\
    &\Realizer{t \; a} \approx_S \Realizer{t \; b} \\
    &\Reasoning{\Realizer{u}\text{ is a tracker}} \\
    &\Realizer{u \; (t \; a)} \approx_T \Realizer{u \; (t \; b)} \\
    &\Reasoning{\lambda \text{-abstraction}} \\
    &\Realizer{(\backslash x \to u \; (t \; x)) \; a} \approx_T \Realizer{(\backslash x \to u \; (t \; x)) \; b}
  \end{align*}
  suffices to show this.
\end{proof}

\begin{construction}[Identity Morphism, \formalisation{https://rahulc29.github.io/realizability/Realizability.PERs.PER.html\#5321}]\label{constr:identity-morphism-pers}
  The \textbf{identity morphism} for a per $R$ is given by the equivalence class of the identity tracker [Definition~\ref{def:identity-tracker}].
\end{construction}

\begin{construction}[Composite Morphisms, \formalisation{https://rahulc29.github.io/realizability/Realizability.PERs.PER.html\#5871}]\label{constr:composite-morphism-pers}
  For pers $R$, $S$ and $T$ and for morphisms $f : R \to S$ and $g : S \to T$ we can construct the \textbf{composite morphism} $g \circ f : R \to T$.
\end{construction}
\begin{proof}
  We shall apply the set-level elimination principle of set-quotients on $f$ and $g$.

  To do this, we must first show how to define a composite morphism by assuming we have representatives for $f$ and $g$ and then show the coherence conditions --- that our definition is independent of the choice of representatives.

  To that end, let us assume we have been given representatives \Realizer{f} for $f$ and \Realizer{g} for $g$ respectively.

  We define the composite morphism as the equivalence class spanned by the composite tracker of \Realizer{f} and \Realizer{g} [Definition \ref{def:composite-tracker}].

  It remains to show that the coherence conditions hold.

  Let \Realizer{f} and \Realizer{f'} both be equivalent trackers of $f$. We will show that the equivalence classes $[\Realizer{\backslash x \to g \; (f \; x)}]$ and $[\Realizer{\backslash x \to g \; (f' \; x)}]$ are equal.

  To show that $[\Realizer{\backslash x \to g \; (f \; x)}]$ and $[\Realizer{\backslash x \to g \; (f' \; x)}]$ are equal, it suffices to show that the representatives are equivalent, that is, $\Realizer{\backslash x \to g \; (f \; x)}$ is equivalent as a tracker to $\Realizer{\backslash x \to g \; (f' \; x)}$.

  Let \Realizer{a} be an element of \PCA~ such that $\Realizer{a} \approx_R \Realizer{a}$ and calculate
  \begin{align*}
    &\Realizer{a} \approx_R \Realizer{a} \\
    &\Reasoning{\Realizer{f}\text{ is equivalent to }\Realizer{f'}} \\
    &\Realizer{f \; a} \approx_S \Realizer{f' \; a} \\
    &\Reasoning{\Realizer{g}\text{ is a tracker}} \\
    &\Realizer{g \; (f \; a)} \approx_T \Realizer{g \; (f' \; a)} \\
    &\Reasoning{\lambda \text{ abstraction in }\PCA} \\
    &\Realizer{\backslash x \to g \; (f \; x) \; a} \approx_T \Realizer{\backslash x \to g \; (f' \; x) \; a}
  \end{align*}
  to see that the equivalence indeed holds.

  A similar calculation shows that our definition is independent of the choice of representative for $g$.
\end{proof}

\subsection{The Category \PER}
\label{sec:category-per}

We need a few auxiliary lemmas before we can formally define the category of pers.

\begin{lemma}[Left Identity Law, \formalisation{https://rahulc29.github.io/realizability/Realizability.PERs.PER.html\#6724}]\label{lemma:left-identity-law-pers}
  The identity morphism [Construction~\ref{constr:identity-morphism-pers}] satisfies the \textbf{left identity law}, that is, for any pers $R$ and $S$ and per morphism $f : R \to S$ we have that $\id_S \circ f = f$ holds.
\end{lemma}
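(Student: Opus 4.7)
The plan is to use the set-level elimination principle of set quotients on the morphism $f$, reducing the statement to an equality of equivalence classes that we can verify by exhibiting a tracker equivalence between representatives. Since the type of per morphisms from $R$ to $S$ is a set (being a set quotient of a set of trackers), and the statement $\id_S \circ f = f$ is a proposition, the elimination into propositions applies and we may freely assume $f$ is presented by a concrete representative tracker $\Realizer{f}$.

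First, I would unfold the definitions. By \ref{constr:identity-morphism-pers}, $\id_S$ is the equivalence class $[\Realizer{i}]$ of the identity tracker. By \ref{constr:composite-morphism-pers}, the composite $\id_S \circ f$ applied to representatives $\Realizer{i}$ and $\Realizer{f}$ is the equivalence class of the composite tracker $\Realizer{\backslash x \to i \; (f \; x)}$. Thus the goal reduces to showing
\[
  [\Realizer{\backslash x \to i \; (f \; x)}] = [\Realizer{f}]
\]
in the set quotient of trackers by equivalence of trackers [Definition~\ref{def:equivalence-of-trackers}]. For this, it suffices to exhibit that $\Realizer{\backslash x \to i \; (f \; x)}$ and $\Realizer{f}$ are equivalent as trackers.

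Next I would verify this tracker equivalence directly. Take any $\Realizer{a} : \PCA$ with $\Realizer{a} \approx_R \Realizer{a}$. We must show $\Realizer{(\backslash x \to i \; (f \; x)) \; a} \approx_S \Realizer{f \; a}$. By $\beta$-reduction in \PCA, $\Realizer{(\backslash x \to i \; (f \; x)) \; a} = \Realizer{i \; (f \; a)}$, and by the defining equation of the identity combinator $\Realizer{i \; (f \; a)} = \Realizer{f \; a}$. So the left and right sides are definitionally equal elements of \PCA, and the required relatedness $\Realizer{f \; a} \approx_S \Realizer{f \; a}$ holds because $\Realizer{f}$ is a tracker from $R$ to $S$ and $\Realizer{a} \approx_R \Realizer{a}$ by hypothesis.

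I do not expect any real obstacle here; the argument is entirely routine once one has set up the quotient elimination. The only mild subtlety is being careful that when eliminating the set-quotient variable $f$ into a proposition, we do not need to discharge any coherence conditions — these were already discharged when defining composition in \ref{constr:composite-morphism-pers}. With that bookkeeping in place, the computation reduces to the $\beta$/$\id$-equalities in \PCA~ combined with the reflexivity of $\Realizer{f \; a}$ under $\approx_S$ supplied by $\Realizer{f}$ being a tracker.
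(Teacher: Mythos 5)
Your proof is correct and is exactly the routine argument the paper leaves to the formalisation (the lemma is stated without a written proof, and your structure — proposition-level quotient elimination on $f$, then tracker equivalence via the $\beta$/identity-combinator equations and reflexivity of $\Realizer{f \; a}$ under $\approx_S$ from $\Realizer{f}$ being a tracker — mirrors the style of the paper's proofs of Constructions~\ref{constr:composite-morphism-pers} and \ref{def:composite-tracker}). One small terminological nit: the equalities $\Realizer{(\backslash x \to i \; (f \; x)) \; a} = \Realizer{i \; (f \; a)} = \Realizer{f \; a}$ are propositional equalities in the set \PCA~ derived from the \Realizer{s}/\Realizer{k} axioms, not definitional ones, but since $\approx_S$ is a relation on a set this changes nothing.
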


\begin{lemma}[Right Identity Law, \formalisation{https://rahulc29.github.io/realizability/Realizability.PERs.PER.html\#7135}]\label{lemma:right-identity-law-pers}
  The identity morphism [Construction~\ref{constr:identity-morphism-pers}] satisfies the \textbf{right identity law}, that is, for any pers $R$ and $S$ and per morphism $f : R \to S$ we have that $f \circ \id_R = f$ holds.
\end{lemma}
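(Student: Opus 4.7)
The plan is to mirror the proof of the left identity law (which, judging from the symmetry of the situation, the author has just given): reduce to a statement about representatives via the set-quotient elimination principle, then verify the equivalence of trackers by a short $\beta$-style calculation inside $\PCA$.

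More concretely, I would first observe that the target statement $f \circ \id_R = f$ is an identity in the set of morphisms from $R$ to $S$, hence a proposition. This means I may apply the elimination principle of the set quotient defining per-morphisms [Definition~\ref{def:per-morphism}] and assume without loss of generality that $f$ is presented by a concrete tracker $\Realizer{f}$, so that $f = [\Realizer{f}]$. Unfolding Construction~\ref{constr:composite-morphism-pers} together with Definition~\ref{def:composite-tracker} and Construction~\ref{constr:identity-morphism-pers}, the composite $f \circ \id_R$ is the equivalence class $[\Realizer{\backslash x \to f \; (i \; x)}]$. Thus it suffices to show $[\Realizer{\backslash x \to f \; (i \; x)}] = [\Realizer{f}]$, and by the characterisation of equality of equivalence classes, this reduces further to showing that $\Realizer{\backslash x \to f \; (i \; x)}$ and $\Realizer{f}$ are equivalent as trackers in the sense of [Definition~\ref{def:equivalence-of-trackers}].

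For this, fix an arbitrary $\Realizer{a} : \PCA$ with $\Realizer{a} \approx_R \Realizer{a}$ and calculate
\begin{align*}
  &\Realizer{(\backslash x \to f \; (i \; x)) \; a} \\
  &\Reasoning{\lambda \text{-abstraction in }\PCA} \\
  &\Realizer{f \; (i \; a)} \\
  &\Reasoning{\text{defining equation of } \Realizer{i}} \\
  &\Realizer{f \; a}.
\end{align*}
Since $\Realizer{f}$ is a tracker and $\Realizer{a} \approx_R \Realizer{a}$, we get $\Realizer{f \; a} \approx_S \Realizer{f \; a}$, which by the above chain of equalities gives $\Realizer{(\backslash x \to f \; (i \; x)) \; a} \approx_S \Realizer{f \; a}$, as required.

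I do not expect any genuine obstacle here: the only mildly delicate point is remembering to discharge the coherence obligation that $f \circ \id_R = f$ is propositional before applying the set-quotient elimination, so that I can in fact pick a tracker representative for $f$. Everything that follows is a direct computation using $\beta$-reduction and the fact that $\Realizer{f}$ tracks itself on diagonal inputs.
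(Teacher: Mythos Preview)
Your proof is correct. The paper does not give an explicit prose proof of this lemma (nor of the companion left identity and associativity laws); it simply states them and defers to the linked Agda formalisation. Your argument---reduce to a representative tracker via proposition-level set-quotient elimination, then establish tracker equivalence by a direct $\beta$-style calculation using the defining equation of $\Realizer{i}$---is exactly the natural one and matches the style of the surrounding constructions in the paper.
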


\begin{lemma}[Associativity Law, \formalisation{https://rahulc29.github.io/realizability/Realizability.PERs.PER.html\#7505}]\label{lemma:associativity-law-pers}
  Composition of morphisms [Construction~\ref{constr:composite-morphism-pers}] satisfies the \textbf{associativity law}, that is, for any pers $R$, $S$, $T$ and $U$ and for morphisms $f : R \to S$, $g : S \to T$, and $h : T \to U$ we have that $h \circ (g \circ f) = (h \circ g) \circ f$.
\end{lemma}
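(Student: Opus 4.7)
The plan is to proceed by the set-quotient elimination principle, using the fact that the equality we want to prove is a proposition (since morphisms of pers, being equivalence classes, live in a set). First I would reduce the goal to the level of representatives: pick arbitrary trackers $\Realizer{f}$, $\Realizer{g}$, $\Realizer{h}$ representing $f$, $g$, $h$ respectively. Under Construction~\ref{constr:composite-morphism-pers}, both $h \circ (g \circ f)$ and $(h \circ g) \circ f$ are then equivalence classes of explicit trackers built by iterated $\lambda$-abstraction, namely $\Realizer{\backslash y \to h \; ((\backslash x \to g \; (f \; x)) \; y)}$ and $\Realizer{\backslash y \to (\backslash x \to h \; (g \; x)) \; (f \; y)}$.

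Next I would invoke the fact that equality of equivalence classes in a set quotient is implied by equivalence of representatives, so it suffices to prove these two trackers are equivalent in the sense of Definition~\ref{def:equivalence-of-trackers}. Take an arbitrary $\Realizer{a}$ with $\Realizer{a} \approx_R \Realizer{a}$. By $\beta$-reduction in $\PCA$, both trackers, applied to $\Realizer{a}$, reduce to the common realizer $\Realizer{h \; (g \; (f \; a))}$, so the two sides are in fact \emph{equal} as elements of $\PCA$. It then remains to observe $\Realizer{h \; (g \; (f \; a))} \approx_U \Realizer{h \; (g \; (f \; a))}$, which follows from three successive applications of the tracker property: $\Realizer{f \; a} \approx_S \Realizer{f \; a}$ since $\Realizer{f}$ is a tracker, then $\Realizer{g \; (f \; a)} \approx_T \Realizer{g \; (f \; a)}$ since $\Realizer{g}$ is a tracker, then $\Realizer{h \; (g \; (f \; a))} \approx_U \Realizer{h \; (g \; (f \; a))}$ since $\Realizer{h}$ is a tracker.

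The main ``obstacle'' is really only bureaucratic: managing the nested set-quotient elimination on three arguments $f$, $g$, $h$ cleanly. Since the target predicate (equality in a set) is a proposition, one can discharge each eliminator in turn without coherence obligations, but one must be careful to state the elimination at the proposition-valued level rather than attempt a full set-level recursor. Once the representatives are in hand, the computation is immediate via $\beta$-reduction, and no use of the symmetry or transitivity of any of $R, S, T, U$ is needed beyond what is already packaged into the tracker property.
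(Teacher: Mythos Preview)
Your proposal is correct. The paper itself does not spell out a prose proof of this lemma---it states the result and defers to the Agda formalisation---so there is no detailed argument to compare against; your approach (proposition-level set-quotient elimination on $f$, $g$, $h$, then $\beta$-reducing both composite trackers to the common form $\Realizer{h \; (g \; (f \; a))}$ and checking reflexivity in $U$ via the three tracker properties) is exactly the expected one and matches the style of the surrounding proofs in Section~\ref{sec:morphisms-of-pers}.
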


\begin{definition}[The Category of Partial Equivalence Relations, \formalisation{https://rahulc29.github.io/realizability/Realizability.PERs.PER.html\#8410}]\label{def:category-per}
  The category of partial equivalence relations is given by
  \begin{enumerate}
  \item The type of pers \PER [Definition~\ref{def:per}] as the type of objects
  \item The set of per morphisms [Definition~\ref{def:per-morphism}] as the set of morphisms
  \item The identity per morphism [Construction~\ref{constr:identity-morphism-pers}] as the identity morphism
  \item The composite of per morphisms [Construction~\ref{constr:composite-morphism-pers}] as the composite of morphisms
  \item The proofs of [Lemma~\ref{lemma:left-identity-law-pers}] and [Lemma~\ref{lemma:right-identity-law-pers}] as evidence for the identity laws
  \item The proof of [Lemma~\ref{lemma:associativity-law-pers}] as evidence for the associativity laws
  \end{enumerate}
\end{definition}

\section{The Subquotient Functor}
\label{sec:subquotient-functor}

In this section we shall define a functor $\subQuot{\_} : \PER \to \Mod$ that takes a per $R$ to the modest \textbf{subquotient assembly} \subQuot{R}.

\subsection{Action on Objects}
\label{sec:subquotient-functor-action-on-objects}

\begin{definition}[Subquotient of a per, \formalisation{https://rahulc29.github.io/realizability/Realizability.PERs.SubQuotient.html\#1771}]\label{def:subquotient-set-per}
  For a per $R$ we define the \textbf{subquotient set} of $R$ as the set obtained by quotienting the domain of $R$ [Definition~\ref{def:domain-per}] with the relation underlying $R$.
\end{definition}
\begin{convention}[Terms of the Subquotient]\label{conv:term-subquotient}
  For a per $R$ we shall refer to terms of the subquotient \subQuot{R} as \textbf{partial equivalence classes}.
\end{convention}

% \begin{lemma}[Effectiveness of Subquotients]\label{lemma:effective-subquotients}
%  For a per $R$ we have that \subQuot{R} is \textbf{effective}, that is, for \Realizer{a} and \Realizer{b} in \PCA with $r_{\Realizer{a}}$ and $r_{\Realizer{b}}$ witnesses to $\Realizer{a} \approx_R \Realizer{a}$ and $\Realizer{b} \approx_R \Realizer{b}$ respectively, we have an equivalence of types
%  \begin{align*}
%    ([\Pair{\Realizer{a}}{r_{\Realizer{a}}}] =_{\subQuot{R}} [\Pair{\Realizer{b}}{r_{\Realizer{b}}}]) \simeq (\Realizer{a} \approx_R \Realizer{b})
%  \end{align*}
%\end{lemma}
%\begin{proof}
%  $R$ is an equivalence relation on the domain of $R$ [Definition~\ref{def:domain-per}] and quotients of equivalence relations are effective in HoTT.
%\end{proof}

The subquotient set of a per [Definition~\ref{def:subquotient-set-per}]  will serve as the carrier of the subquotient assembly. To define the realizability relation for the subquotient assembly we will have to perform a ``large elimination'' --- we will have to eliminate a partial equivalence class into the universe.

Since the subquotient of a per $R$ is constructed using a higher inductive type, we will proceed in two steps for this large elimination.
\begin{enumerate}
\item Define a \emph{pre-realizability relation} $\mathsf{preReal} : \PCA \to \Dom{R} \to \Prop{}$
\item Show the \emph{coherence condition} --- for \Pair{\Realizer{a}}{r_{\Realizer{a}}} and \Pair{\Realizer{b}}{r_{\Realizer{b}}} in \Dom{R} with $\Realizer{a} \approx_R \Realizer{b}$ we have that $\mathsf{preReal}(\Pair{\Realizer{a}}{r_{\Realizer{a}}}) = \mathsf{preReal}({\Pair{\Realizer{b}}{r_{\Realizer{b}}}})$ holds.
\end{enumerate}

\begin{definition}[Pre-realizability Relation]\label{def:pre-realizability}
  We say that $\Realizer{a} : \PCA$ pre-realizes \Pair{\Realizer{b}}{r_{\Realizer{b}}} if $\Realizer{a} \approx_R \Realizer{b}$ holds.
\end{definition}

\begin{lemma}[Coherence for Pre-Realizability]\label{lemma:coherence-for-pre-realizability}
  \sloppy For \Pair{\Realizer{a}}{r_{\Realizer{a}}} and \Pair{\Realizer{b}}{r_{\Realizer{b}}} in \Dom{R} with $\Realizer{a} \approx_R \Realizer{b}$ we have that $\mathsf{preReal}(\Pair{\Realizer{a}}{r_{\Realizer{a}}}) = \mathsf{preReal}(\Pair{\Realizer{b}}{r_{\Realizer{b}}})$ holds.
\end{lemma}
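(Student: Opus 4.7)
The plan is to reduce the equality of propositions $\mathsf{preReal}(\langle \Realizer{a}, r_{\Realizer{a}} \rangle) = \mathsf{preReal}(\langle \Realizer{b}, r_{\Realizer{b}} \rangle)$ (viewed as functions $\PCA \to \Prop{}$ with the $\Dom{R}$ argument partially applied) to symmetry and transitivity of $R$. First I would apply function extensionality, reducing the goal to showing that for every $\Realizer{x} : \PCA$, the propositions $\Realizer{x} \approx_R \Realizer{a}$ and $\Realizer{x} \approx_R \Realizer{b}$ are equal. Since $\Prop{}$ is a universe of propositions, propositional extensionality (a consequence of univalence) further reduces this to showing logical equivalence $(\Realizer{x} \approx_R \Realizer{a}) \leftrightarrow (\Realizer{x} \approx_R \Realizer{b})$.

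Both directions are then immediate from the structure of a per. For the forward direction, assuming $\Realizer{x} \approx_R \Realizer{a}$, together with the hypothesis $\Realizer{a} \approx_R \Realizer{b}$, transitivity of $R$ yields $\Realizer{x} \approx_R \Realizer{b}$. For the backward direction, assuming $\Realizer{x} \approx_R \Realizer{b}$, we first use symmetry to turn $\Realizer{a} \approx_R \Realizer{b}$ into $\Realizer{b} \approx_R \Realizer{a}$, and then transitivity gives $\Realizer{x} \approx_R \Realizer{a}$. Note that the hypotheses $r_{\Realizer{a}}$ and $r_{\Realizer{b}}$ that $\Realizer{a}$ and $\Realizer{b}$ lie in $\Dom{R}$ are not actually needed for this argument; the hypothesis $\Realizer{a} \approx_R \Realizer{b}$ plus symmetry and transitivity of $R$ does all the work (indeed, one can derive $r_{\Realizer{a}}$ and $r_{\Realizer{b}}$ from $\Realizer{a} \approx_R \Realizer{b}$).

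There is no genuine obstacle here; the only care needed is bookkeeping around the two uses of extensionality. The lemma is essentially the observation that the ``fibre of $R$ over a point of its domain'' is invariant along the $R$-classes, which is a completely routine consequence of the per axioms.
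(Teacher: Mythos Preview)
Your proof is correct and follows essentially the same approach as the paper: reduce the equality via function and propositional extensionality to a biconditional, then discharge both directions using symmetry and transitivity of $R$. Your additional remark that $r_{\Realizer{a}}$ and $r_{\Realizer{b}}$ are not actually needed is a nice observation, though not made explicitly in the paper.
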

\begin{proof}
  By propositional and function extensionality, it suffices to show that for any $\Realizer{r} : \PCA$, the propositions $\mathsf{preReal}(\Pair{\Realizer{a}}{r_{\Realizer{a}}})(\Realizer{r})$ and $\mathsf{preReal}(\Pair{\Realizer{b}}{r_{\Realizer{b}}})(\Realizer{r})$ imply each other. Unfolding definitions, we need to show that $\Realizer{r} \approx_R \Realizer{a}$ holds iff $\Realizer{r} \approx_R \Realizer{b}$ holds. This follows easily from the symmetry and transitivity of $R$.
\end{proof}

\begin{construction}[Realizability for \subQuot{R}, \formalisation{https://rahulc29.github.io/realizability/Realizability.PERs.SubQuotient.html\#1866}]\label{def:realizability-for-subquotient}
  \sloppy Since the coherence condition for the pre-realizability relation $\mathsf{preReal}$ holds, we can upgrade it to an \emph{actual} realizability relation $\_\Vdash_{\subQuot{R}}\_ : \PCA \to \subQuot{R} \to \Prop{}$. We call this the \textbf{subquotient realizability relation}.
\end{construction}

\begin{lemma}\label{lemma:surjectivity-of-realizability-for-subquotient}
  For any partial equivalence class of a per, a realizer exists.
\end{lemma}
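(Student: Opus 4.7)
The plan is to eliminate the partial equivalence class into a proposition. Fix a per $R$ and an arbitrary partial equivalence class $q : \subQuot{R}$. Our goal is to produce a mere existence statement $\exists (\Realizer{a} : \PCA) \; \Realizer{a} \Vdash_{\subQuot{R}} q$. Since this goal is a proposition, the elimination principle of the set quotient lets us reduce to the case where $q$ has the form $[\Pair{\Realizer{a}}{r_{\Realizer{a}}}]$ for some $\Pair{\Realizer{a}}{r_{\Realizer{a}}} : \Dom{R}$, with no coherence obligations to discharge.

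In this reduced case the obvious candidate realizer is $\Realizer{a}$ itself. By [Construction~\ref{def:realizability-for-subquotient}], the proposition $\Realizer{a} \Vdash_{\subQuot{R}} [\Pair{\Realizer{a}}{r_{\Realizer{a}}}]$ unfolds to the pre-realizability relation of [Definition~\ref{def:pre-realizability}] applied to $\Realizer{a}$ and $\Pair{\Realizer{a}}{r_{\Realizer{a}}}$, which is precisely $\Realizer{a} \approx_R \Realizer{a}$. This is exactly the witness $r_{\Realizer{a}}$ packaged with $\Realizer{a}$ when forming an element of $\Dom{R}$. We then close the existential by injecting $\Pair{\Realizer{a}}{r_{\Realizer{a}}}$ into the propositional truncation via $\ptInc{\_}$.

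There is essentially no obstacle in this argument; the only subtlety worth noting is the standard one of making sure the eliminator is used at the right level. Because $\subQuot{R}$ is a set quotient and our target is a proposition, we only need the propositional eliminator, so no higher coherences with respect to the quotienting relation $R$ appear. The entire proof reduces to the observation that membership in $\Dom{R}$ directly supplies both the realizer and the proof that it realizes its own equivalence class.
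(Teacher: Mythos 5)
Your proposal is correct and follows exactly the same route as the paper: apply the proposition-level set-quotient eliminator to reduce to a representative $\Pair{\Realizer{a}}{r_{\Realizer{a}}} : \Dom{R}$, observe that $\Realizer{a} \Vdash_{\subQuot{R}} [\Pair{\Realizer{a}}{r_{\Realizer{a}}}]$ unfolds definitionally to $\Realizer{a} \approx_R \Realizer{a}$, and use $r_{\Realizer{a}}$ as the witness. No gaps.
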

\begin{proof}
  Let $R$ be a per and $q : \subQuot{R}$ a partial equivalence class of $R$.
By the proposition-level elimination principle for set-quotients, it suffices to show existence when \Pair{\Realizer{q}}{r_{\Realizer{q}}} represents $q$.
  Since \Realizer{q} is in the domain of $R$, we have that $\Realizer{q} \approx_R \Realizer{q} \doteq \Realizer{q} \Vdash_{\subQuot{R}} [\Pair{\Realizer{q}}{r_{\Realizer{q}}}] \doteq \Realizer{q} \Vdash_{\subQuot{R}} q$ holds, and we are done.
\end{proof}

\begin{definition}[\formalisation{https://rahulc29.github.io/realizability/Realizability.PERs.SubQuotient.html\#1866}]\label{def:subquotient-assembly}
  For a per $R$, the \textbf{subquotient assembly} on $R$ has
  \begin{enumerate}
  \item The subquotient set [Definition~\ref{def:subquotient-set-per}] as carrier
  \item The subquotient realizability relation [Construction~\ref{def:realizability-for-subquotient}] as the realizability relation
    % FIXME : good name for this property
  \item The proof of [Lemma~\ref{lemma:surjectivity-of-realizability-for-subquotient}] as evidence that every element is realized
  \end{enumerate}
\end{definition}

\begin{theorem}[Modesty of \subQuot{R}, \formalisation{https://rahulc29.github.io/realizability/Realizability.PERs.SubQuotient.html\#2846}]\label{theorem:subquotient-is-modest}
  For a per $R$ the subquotient assembly \subQuot{R} is modest.
\end{theorem}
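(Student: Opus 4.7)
The plan is to unpack the definition of modesty and reduce the question to a direct calculation with the relation $R$. Let $x, y : \subQuot{R}$ be partial equivalence classes and assume we are given the existence of some $\Realizer{a} : \PCA$ that realizes both $x$ and $y$. We must show $x = y$ as elements of \subQuot{R}. Since the subquotient is a set, this equality is a proposition, which permits the eliminations that follow.

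First I would use the proposition-level elimination of the propositional truncation in the hypothesis to obtain an honest realizer $\Realizer{a}$ together with $\Realizer{a} \Vdash_{\subQuot{R}} x$ and $\Realizer{a} \Vdash_{\subQuot{R}} y$. Then, since the goal $x = y$ is a proposition, I would apply the proposition-level elimination principle of the set quotient twice (once on $x$, once on $y$) to reduce to the case where $x$ and $y$ come from actual representatives, say $x \doteq [\Pair{\Realizer{x}}{r_{\Realizer{x}}}]$ and $y \doteq [\Pair{\Realizer{y}}{r_{\Realizer{y}}}]$ with \Pair{\Realizer{x}}{r_{\Realizer{x}}} and \Pair{\Realizer{y}}{r_{\Realizer{y}}} in \Dom{R}.

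Next, unfolding the subquotient realizability relation of [Construction~\ref{def:realizability-for-subquotient}] via the pre-realizability relation of [Definition~\ref{def:pre-realizability}], the two realizability assumptions become the concrete propositions $\Realizer{a} \approx_R \Realizer{x}$ and $\Realizer{a} \approx_R \Realizer{y}$. Applying symmetry of $R$ to the first yields $\Realizer{x} \approx_R \Realizer{a}$, and then transitivity of $R$ with the second yields $\Realizer{x} \approx_R \Realizer{y}$. This is precisely the hypothesis required by the set-quotient constructor to conclude that $[\Pair{\Realizer{x}}{r_{\Realizer{x}}}] = [\Pair{\Realizer{y}}{r_{\Realizer{y}}}]$, i.e., that $x = y$, which is what we wanted.

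There is essentially no mathematical obstacle here; the content is that symmetry and transitivity of $R$ propagate through the subquotient construction to give modesty. The only mildly fiddly aspect is the bookkeeping surrounding the HIT eliminations, in particular checking that each layer we eliminate into is indeed a proposition so that the proposition-level elimination applies.
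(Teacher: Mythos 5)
Your proof is correct and follows essentially the same route as the paper's: proposition-level elimination on the truncated hypothesis and on the two set-quotient arguments, then unfolding subquotient realizability and using symmetry plus transitivity of $R$ to relate the representatives. You are in fact slightly more careful than the paper, which attributes the final step to transitivity alone while tacitly also using symmetry.
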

\begin{proof}
  Let $r:\subQuot{R}$ and $s:\subQuot{R}$ have a shared realizer \Realizer{t}.
  By the proposition-level elimination principle for set-quotients, it suffices to show modesty for the case when we have been given representatives for $r$ and $s$.
  Let \Pair{\Realizer{r}}{r_{\Realizer{r}}} and \Pair{\Realizer{s}}{r_{\Realizer{s}}} represent $r$ and $s$ respectively. We need to show that $\Realizer{r} \approx_R \Realizer{s}$ holds.
  But this follows immediately from the transitivity of $R$ and that \Realizer{t} is a shared realizer for $r$ and $s$.
\end{proof}

\subsection{Action on Morphisms}
\label{sec:subquotient-functor-action-on-morphisms}

We have already defined the action of the subquotient functor on objects [Section~\ref{sec:subquotient-functor-action-on-objects}].
The action of the subquotient functor on morphisms is conceptually simple but verifying the well-formedness of the construction is a little messy. Uninterested readers may wish to skip the verification.

\begin{construction}[\formalisation{https://rahulc29.github.io/realizability/Realizability.PERs.SubQuotient.html\#3602}]\label{constr:subquotient-assembly-morphism-from-per-morphism}
  For pers $R$ and $S$ and a per morphism $f : R \to S$ we can construct an assembly morphism $\subQuot{f} : \subQuot{R} \to \subQuot{S}$.
\end{construction}
\begin{proof}
  First, we shall apply the set-level elimination principle for set quotients on $f$ and assume we have a representative \Realizer{f} for $f$.
  To define the underlying map of the assembly morphism, we apply the set-level elimination principle on $r : \subQuot{R}$. Let us say we have a representative \Pair{\Realizer{r}}{r_{\Realizer{r}}} for $r$. We need to choose a partial equivalence class of $S$. We choose the partial equivalence class represented by \Realizer{f \; r}. This choice is independent of the choice of representative for $r$ since \Realizer{f} is a tracker. It is also easy to see that \Realizer{f} is a tracker for the underlying map.

    It remains to see that our definition is independent of the choice of representative for $f$.
    Let \Realizer{f} and \Realizer{f'} be equivalent trackers that both represent $f$.
    Recall that, to show two assembly morphisms equal, it suffices to show that the underlying maps are pointwise equal [Lemma~\ref{lemma:assembly-morphism-extensionality}], so we need to show equality of the underlying maps for arbitrary $r : \subQuot{R}$. By the proposition-level elimination principle for set quotients on $r$, we can further assume we have a representation \Pair{\Realizer{r}}{r_{\Realizer{r}}} for $r$.

    Our goal now has been reduced to showing that the partial equivalence classes represented by \Realizer{f \; r} and \Realizer{f' \; r} are equal. This is obvious --- since \Realizer{f} and \Realizer{f'} are equivalent as trackers and $\Realizer{r} \approx_R \Realizer{r}$ holds, so we get that $\Realizer{f \; r} \approx_S \Realizer{f' \; r}$ holds and thus the partial equivalence classes represented by \Realizer{f \; r} and \Realizer{f' \; r} are equal.
\end{proof}

\begin{lemma}[\formalisation{https://rahulc29.github.io/realizability/Realizability.PERs.SubQuotient.html\#8314}]
  The mapping defined in [Construction~\ref{constr:subquotient-assembly-morphism-from-per-morphism}] is functorial, that is, it maps identities in \PER~ to corresponding identities in \Mod~ and composites in \PER~ to corresponding composites in \Mod.
\end{lemma}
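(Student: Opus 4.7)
The plan is to verify the two functoriality laws separately, in each case peeling off the quotient structure with the set-quotient elimination principles until what remains is a routine calculation in the combinatory algebra.

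For preservation of identities, I need to show $\subQuot{\id_R} = \id_{\subQuot{R}}$ for any per $R$. Recall that $\id_R$ is the equivalence class of the identity tracker \Realizer{i} [Construction~\ref{constr:identity-morphism-pers}], so by Construction~\ref{constr:subquotient-assembly-morphism-from-per-morphism} the morphism $\subQuot{\id_R}$ sends a partial equivalence class $[\Pair{\Realizer{r}}{r_{\Realizer{r}}}]$ to the one represented by $\Realizer{i \; r}$. By assembly morphism extensionality [Lemma~\ref{lemma:assembly-morphism-extensionality}] it is enough to check that $\subQuot{\id_R}$ and $\id_{\subQuot{R}}$ agree pointwise on every $r : \subQuot{R}$. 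Applying the proposition-level elimination principle for set quotients on $r$, I can assume $r = [\Pair{\Realizer{r}}{r_{\Realizer{r}}}]$ and reduce the goal to the equality of partial equivalence classes $[\Pair{\Realizer{i \; r}}{\cdot}] = [\Pair{\Realizer{r}}{r_{\Realizer{r}}}]$ in $\subQuot{R}$. This equality follows because $\Realizer{i \; r} = \Realizer{r}$ holds in $\PCA$, so the two representatives are $R$-related and hence define the same equivalence class.

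For preservation of composition, given per morphisms $f : R \to S$ and $g : S \to T$, I need to show $\subQuot{g \circ f} = \subQuot{g} \circ \subQuot{f}$. I apply the set-level elimination principle for set quotients twice, once on $f$ and once on $g$, to reduce to the case where $f$ and $g$ have representative trackers \Realizer{f} and \Realizer{g}. Then $g \circ f$ is represented by the composite tracker $\Realizer{\backslash x \to g \; (f \; x)}$ [Construction~\ref{constr:composite-morphism-pers}, Definition~\ref{def:composite-tracker}]. Again using Lemma~\ref{lemma:assembly-morphism-extensionality}, it suffices to show pointwise equality; using the proposition-level elimination on an arbitrary $r : \subQuot{R}$, I can assume $r = [\Pair{\Realizer{r}}{r_{\Realizer{r}}}]$. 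The goal then becomes the equality of the partial equivalence classes represented by $\Realizer{(\backslash x \to g \; (f \; x)) \; r}$ and $\Realizer{g \; (f \; r)}$, which holds by $\beta$-reduction in the combinatory algebra.

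The main obstacle is not mathematical content but bureaucratic management of the set-quotient eliminations: in each case I must discharge the coherence obligations by unfolding the construction and chasing the appropriate equivalence under the relation $R$ or $T$. Fortunately, the core computations ($\Realizer{i \; r} = \Realizer{r}$ and $\beta$-reduction of the composite tracker) give the required equalities on the nose in $\PCA$, which is the strongest possible form of the conclusion, so the $R$- and $T$-relatedness steps are immediate from the fact that the representatives involved are already in the respective domains.
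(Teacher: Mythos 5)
Your proof is correct; the paper omits the proof of this lemma entirely (deferring to the formalisation), and your argument is the standard one it would contain: reduce to representatives via quotient elimination and assembly morphism extensionality, then conclude from $\Realizer{i \; a} = \Realizer{a}$ and $\beta$-reduction in \PCA. One minor simplification: since equality of assembly morphisms is a proposition, the proposition-level eliminator already suffices for $f$ and $g$ in the composition case, so the set-level eliminator you invoke (with its coherence obligations, trivially discharged here) is not needed.
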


\subsection{The Subquotient Functor is Fully Faithful}

\begin{lemma}[\formalisation{https://rahulc29.github.io/realizability/Realizability.PERs.SubQuotient.html\#9712}]\label{lemma:subquotient-action-morphisms-embedding}
  The action of the subquotient functor on per morphisms [Construction~\ref{constr:subquotient-assembly-morphism-from-per-morphism}] is an embedding of types, that is, it has propositional fibres.
\end{lemma}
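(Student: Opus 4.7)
The plan is to reduce the statement to plain injectivity, and then to extract equivalence of trackers from equality of quotient classes via the effectivity of set-quotients. Since per morphisms form a set (as a quotient of a set of trackers) and assembly morphisms form a set, having propositional fibres is equivalent to being injective. So it suffices to show that whenever $\subQuot{f} = \subQuot{g}$ as assembly morphisms $\subQuot{R} \to \subQuot{S}$, we already have $f = g$ as per morphisms.

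To show this, I would first apply the proposition-level elimination principle for set-quotients to both $f$ and $g$, reducing to the case where $f$ is represented by a tracker $\Realizer{f}$ and $g$ by a tracker $\Realizer{g}$. The goal then becomes $[\Realizer{f}] = [\Realizer{g}]$ in the quotient of trackers, which by definition of the quotient is implied by $\Realizer{f}$ and $\Realizer{g}$ being equivalent as trackers in the sense of Definition~\ref{def:equivalence-of-trackers}. So I would fix $\Realizer{a} : \PCA$ with $\Realizer{a} \approx_R \Realizer{a}$ and aim to show $\Realizer{f \; a} \approx_S \Realizer{g \; a}$.

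The key step is to transport this goal across the subquotient construction. Consider the partial equivalence class $[\Pair{\Realizer{a}}{r_{\Realizer{a}}}] : \subQuot{R}$. By the definition of $\subQuot{f}$ and $\subQuot{g}$ from Construction~\ref{constr:subquotient-assembly-morphism-from-per-morphism}, we have
\begin{align*}
  \subQuot{f}([\Pair{\Realizer{a}}{r_{\Realizer{a}}}]) &= [\Pair{\Realizer{f \; a}}{\_}], \\
  \subQuot{g}([\Pair{\Realizer{a}}{r_{\Realizer{a}}}]) &= [\Pair{\Realizer{g \; a}}{\_}].
\end{align*}
From the hypothesis $\subQuot{f} = \subQuot{g}$ together with Lemma~\ref{lemma:assembly-morphism-extensionality}, the underlying maps agree pointwise, so the two classes on the right are equal in $\subQuot{S}$.

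The main obstacle is the last step: passing from equality of two partial equivalence classes in $\subQuot{S}$ back to the underlying relation $\Realizer{f \; a} \approx_S \Realizer{g \; a}$. This is precisely the \emph{effectivity} of set-quotients by equivalence relations, a standard fact in HoTT/UF. Effectivity applies here because the relation on $\Dom{S}$ used to form $\subQuot{S}$ is symmetric and transitive by the per structure on $S$, and reflexive on elements of $\Dom{S}$ by Definition~\ref{def:domain-per}. Once effectivity is invoked, we immediately obtain $\Realizer{f \; a} \approx_S \Realizer{g \; a}$, which is exactly what was required to conclude that $\Realizer{f}$ and $\Realizer{g}$ are equivalent trackers, and hence that $f = g$.
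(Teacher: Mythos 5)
Your proof is correct, but the crucial last step takes a genuinely different route from the paper. Both arguments agree up to the point of fixing trackers $\Realizer{f}, \Realizer{g}$ and reducing to $\Realizer{f\;a} \approx_S \Realizer{g\;a}$ for $\Realizer{a} \approx_R \Realizer{a}$. You then compute $\subQuot{f}([\Pair{\Realizer{a}}{r_{\Realizer{a}}}])$ and $\subQuot{g}([\Pair{\Realizer{a}}{r_{\Realizer{a}}}])$ explicitly as the classes of $\Realizer{f\;a}$ and $\Realizer{g\;a}$, and invoke \emph{effectivity} of the set quotient of $\Dom{S}$ to decode the resulting equality of classes back into the relation. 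This is legitimate: the relation on $\Dom{S}$ is indeed a prop-valued equivalence relation (reflexivity is exactly the witness carried by elements of $\Dom{S}$), and the standard effectivity lemma is available predicatively via propositional univalence, though it is itself a ``large elimination'' into $\Prop{\Univ{U}}$ of the same kind the paper performs when defining the subquotient realizability relation. The paper avoids invoking effectivity as a separate lemma: it instead observes that, since $\Realizer{f}$ and $\Realizer{g}$ track the \emph{same} assembly morphism, both $\Realizer{f\;a}$ and $\Realizer{g\;a}$ realize the single element $f([\Pair{\Realizer{a}}{r_{\Realizer{a}}}])$ of $\subQuot{S}$; choosing a representative $\Pair{\Realizer{s}}{r_{\Realizer{s}}}$ of that element and unfolding the already-constructed realizability relation yields $\Realizer{f\;a} \approx_S \Realizer{s}$ and $\Realizer{g\;a} \approx_S \Realizer{s}$, whence the claim by symmetry and transitivity of $S$. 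In effect the paper reuses the realizability relation as the ``decode'' family, so no additional machinery is needed; your version is conceptually cleaner if one is happy to cite effectivity, and the paper's generalises more smoothly to the situation where one only knows that the tracked morphism equals $f$, without computing its action on classes.
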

\begin{proof}
  Let us say we have been given a morphism $f : \subQuot{R} \to \subQuot{S}$ along with two per morphisms $x$ and $y$ in the fibre space over $f$, so we have that both $\subQuot{x} = f$ and $\subQuot{y} = f$ hold. Since assembly morphisms form a set, showing we have propositional fibres reduces to showing that $x = y$ holds. By the proposition-level elimination principle for set-quotients, it suffices to show that $x = y$ holds when we have actual representatives for $x$ and $y$. Let \Realizer{x} and \Realizer{y} be trackers [Definition~\ref{def:tracker}] that represent $x$ and $y$ respectively.

  We can refine our goal to showing that $[ \Realizer{x} ] = [ \Realizer{y} ]$ holds, for which it suffices to show that \Realizer{x} and \Realizer{y} are equivalent as trackers [Definition~\ref{def:equivalence-of-trackers}]. To show that \Realizer{x} and \Realizer{y} are equivalent as trackers, we need to show that for any \Realizer{r} such that $\Realizer{r} \approx_R \Realizer{r}$ holds, we have that $\Realizer{x \; r} \approx_S \Realizer{y \; r}$ holds.

  \sloppy Let us zoom out and think about what meaning we can make of \Realizer{x \; r} and \Realizer{y \; r}. We know that $\Realizer{r} \approx_R \Realizer{r}$ holds; witnessed by, say $e_{\Realizer{r}} : \Realizer{r} \approx_R \Realizer{r}$. This means we have a term $\Pair{\Realizer{r}}{e_{\Realizer{r}}}$ of type $\Dom{R}$. We can apply $f$ to it to get $f([\Pair{\Realizer{r}}{e_{\Realizer{r}}}]) : \subQuot{S}$.
  Since $\subQuot{x} = f$ and $\subQuot{y} = f$ hold, we have that $\subQuot{x}([\Pair{\Realizer{r}}{e_{\Realizer{r}}}]) = \subQuot{y}([\Pair{\Realizer{r}}{e_{\Realizer{r}}}])$ holds.
  Moreover, since \Realizer{x} and \Realizer{y} and are trackers for $x$ and $y$ respectively, we have that $\Realizer{x \; r} \Vdash_{\subQuot{S}} \subQuot{x}([\Pair{\Realizer{r}}{e_{\Realizer{r}}}])$ and $\Realizer{y \; r} \Vdash_{\subQuot{S}} \subQuot{y}([\Pair{\Realizer{r}}{e_{\Realizer{r}}}])$ hold. Another way to write this is to say that $\Realizer{x \; r} \Vdash_{\subQuot{S}} f([\Pair{\Realizer{r}}{e_{\Realizer{r}}}])$ and $\Realizer{y \; r} \Vdash_{\subQuot{S}} f([\Pair{\Realizer{r}}{e_{\Realizer{r}}}])$ hold.

  \sloppy To go beyond $\Realizer{x \; r} \Vdash_{\subQuot{S}} f([\Pair{\Realizer{r}}{e_{\Realizer{r}}}])$ and $\Realizer{y \; r} \Vdash_{\subQuot{S}} f([\Pair{\Realizer{r}}{e_{\Realizer{r}}}])$, we will apply the proposition-level elimination principle for set-quotients; this time on $f([\Pair{\Realizer{r}}{e_{\Realizer{r}}}])$. Let us say that $\Pair{\Realizer{s}}{e_{\Realizer{s}}}$ represents $f([\Pair{\Realizer{r}}{e_{\Realizer{r}}}])$. In this case, $\Realizer{x \; r} \Vdash_{\subQuot{S}} f([\Pair{\Realizer{r}}{e_{\Realizer{r}}}])$ unfolds to $\Realizer{x \; r}  \Vdash_{\subQuot{S}} [\Pair{\Realizer{s}}{e_{\Realizer{s}}}]$, which further unfolds to $\Realizer{x \; r} \approx_S \Realizer{s}$. Likewise, $\Realizer{y \; r} \Vdash_{\subQuot{S}} f([\Pair{\Realizer{r}}{e_{\Realizer{r}}}])$ unfolds to $\Realizer{y \; r} \approx_S \Realizer{s}$. Now, since $\Realizer{x \; r} \approx_S \Realizer{s}$ and $\Realizer{y \; r} \approx_S \Realizer{s}$ hold, it is indeed the case that $\Realizer{x \; r} \approx_S \Realizer{y \; r}$ holds, and we are done!
\end{proof}
To show that an embedding can be upgraded to an equivalence we need to show that every fibre space is inhabited. However, in our case, we can go one step further, we can find a choice of inhabitant of every fibre space.

We will proceed in four steps. First, we will show that any tracker of an assembly morphism $f : \subQuot{R} \to \subQuot{S}$ also tracks a per morphism $R \to S$. Secondly, we will show that the actual choice of tracker is immaterial, that is, if \Realizer{a} and \Realizer{b} both track $f$ then they necessarily track the \emph{same} per morphism. In the third step, we will define a function from assembly morphisms $\subQuot{R} \to \subQuot{S}$ to per morphisms $R \to S$. Finally, we will check that the per morphism we obtain is inhabits the fibre space over $f$.

\begin{lemma}[\formalisation{https://rahulc29.github.io/realizability/Realizability.PERs.SubQuotient.html\#6520}]\label{lemma:morphism-tracker-per-tracker}
  A realizer $\Realizer{a} : \PCA$ that tracks a morphism of subquotient assemblies $f : \subQuot{R} \to \subQuot{S}$ also tracks a per morphism $R \to S$.
\end{lemma}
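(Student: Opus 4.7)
The plan is to unfold what it means for $\Realizer{a}$ to be a tracker for the assembly morphism $f$ and to use modesty of $\subQuot{S}$ (or rather, a direct argument via the same representative) to extract the per-tracker condition from it. Concretely, given $\Realizer{p}, \Realizer{q} : \PCA$ with $\Realizer{p} \approx_R \Realizer{q}$, our goal is to show $\Realizer{a \; p} \approx_S \Realizer{a \; q}$.

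First I would observe that by symmetry and transitivity of $R$, the hypothesis $\Realizer{p} \approx_R \Realizer{q}$ gives us witnesses $r_{\Realizer{p}} : \Realizer{p} \approx_R \Realizer{p}$ and $r_{\Realizer{q}} : \Realizer{q} \approx_R \Realizer{q}$, so that $\Pair{\Realizer{p}}{r_{\Realizer{p}}}$ and $\Pair{\Realizer{q}}{r_{\Realizer{q}}}$ both live in $\Dom{R}$. Moreover, since $\Realizer{p} \approx_R \Realizer{q}$ holds, the partial equivalence classes $[\Pair{\Realizer{p}}{r_{\Realizer{p}}}]$ and $[\Pair{\Realizer{q}}{r_{\Realizer{q}}}]$ are equal in $\subQuot{R}$; hence their images under $f$ are equal as well. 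Unfolding the subquotient realizability relation, $\Realizer{p} \Vdash_{\subQuot{R}} [\Pair{\Realizer{p}}{r_{\Realizer{p}}}]$ and $\Realizer{q} \Vdash_{\subQuot{R}} [\Pair{\Realizer{q}}{r_{\Realizer{q}}}]$ both hold tautologically.

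Next, because $\Realizer{a}$ tracks $f$ as an assembly morphism, $\Realizer{a \; p}$ realizes $f([\Pair{\Realizer{p}}{r_{\Realizer{p}}}])$ and $\Realizer{a \; q}$ realizes $f([\Pair{\Realizer{q}}{r_{\Realizer{q}}}])$ in $\subQuot{S}$. By the equality established above, both $\Realizer{a \; p}$ and $\Realizer{a \; q}$ realize the same element $z \Define f([\Pair{\Realizer{p}}{r_{\Realizer{p}}}])$ of $\subQuot{S}$. Since our eventual goal $\Realizer{a \; p} \approx_S \Realizer{a \; q}$ is a proposition, we may apply the proposition-level elimination principle for set-quotients on $z$ and assume $z$ is represented by some $\Pair{\Realizer{s}}{r_{\Realizer{s}}}$ with $\Realizer{s} \in \Dom{S}$. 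Under this representation, the two realizability statements unfold to $\Realizer{a \; p} \approx_S \Realizer{s}$ and $\Realizer{a \; q} \approx_S \Realizer{s}$, and then symmetry and transitivity of $S$ give $\Realizer{a \; p} \approx_S \Realizer{a \; q}$.

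There is no real mathematical obstacle here; the content is just the observation that the $\subQuot{S}$-realizability relation is precisely the underlying per relation of $S$ (once a representative is chosen). The only bookkeeping to watch is that the two distinct witnesses $r_{\Realizer{p}}$ and $r_{\Realizer{q}}$ are used only to land in $\Dom{R}$ and are irrelevant after quotienting, and that the set-quotient elimination on $z \in \subQuot{S}$ is justified because we are proving a proposition.
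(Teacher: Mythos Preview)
Your proposal is correct and follows essentially the same route as the paper: derive $r_{\Realizer{p}}, r_{\Realizer{q}}$ from symmetry and transitivity of $R$, use $\Realizer{p} \approx_R \Realizer{q}$ to identify the two partial equivalence classes and hence their images under $f$, apply the tracker hypothesis to see that both $\Realizer{a \; p}$ and $\Realizer{a \; q}$ realize this common image, eliminate on that image to obtain a representative $\Realizer{s}$, and conclude by symmetry and transitivity of $S$. The only cosmetic difference is that the paper spells out the two tracker calculations as displayed chains, whereas you summarise them in prose; mathematically there is nothing to distinguish the two arguments.
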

\begin{proof}
  Let \Realizer{a} be a tracker for $f : \subQuot{R} \to \subQuot{S}$. We need to show that for any \Realizer{r} and \Realizer{r'} that are related by $R$, \Realizer{a \; r} and \Realizer{a \; r'} are related by $S$. Notice that both \Realizer{r} and \Realizer{r'} are related to themselves so they are in the domain of $R$. Let $r_{\Realizer{r}} : \Realizer{r} \approx_R \Realizer{r}$ and $r_{\Realizer{r'}} : \Realizer{r'} \approx_R \Realizer{r'}$ respectively witness \Realizer{r} and \Realizer{r'} being related to themselves. Observe that \Realizer{a \; r} and \Realizer{a \; r'} are necessarily related to $f([\Pair{\Realizer{r}}{r_{\Realizer{r}}}])$ and therefore must be related to each other.

  To verify the details, apply the proposition-level elimination rule on $f([\Pair{\Realizer{r}}{r_{\Realizer{r}}}])$. We assume that $f([\Pair{\Realizer{r}}{r_{\Realizer{r}}}])$ is represented by $\Pair{\Realizer{s}}{r_{\Realizer{s}}} : \Dom{S}$.

  We calculate to see that 
  \begin{align*}
    &\Realizer{r} \Vdash_{\subQuot{R}} [\Pair{\Realizer{r}}{r_{\Realizer{r}}}] \\
    &\reasoning{\Realizer{a}\text{ tracks }f} \\
    &\Realizer{a \; r} \Vdash_{\subQuot{S}} f([\Pair{\Realizer{r}}{r_{\Realizer{r}}}]) \\
    &\doteq \reasoning{f([\Pair{\Realizer{r}}{r_{\Realizer{r}}}])\text{ is represented by }\Pair{\Realizer{s}}{r_{\Realizer{s}}}}\\
    &\Realizer{a \; r} \Vdash_{\subQuot{S}} [\Pair{\Realizer{s}}{r_{\Realizer{s}}}] \\
    &\doteq \reasoning{\text{by definition of subquotient realizability}} \\
    &\Realizer{a \; r} \approx_S \Realizer{s}
  \end{align*}
  holds.
  
  Similarly, we have that
  \begin{align*}
    &\Realizer{r'} \Vdash_{\subQuot{R}} [\Pair{\Realizer{r'}}{r_{\Realizer{r'}}}] \\
    &\reasoning{\Realizer{a}\text{ tracks }f} \\
    &\Realizer{a \; r'} \Vdash_{\subQuot{S}} f([\Pair{\Realizer{r'}}{r_{\Realizer{r'}}}]) \\
    &= \reasoning{[\Pair{\Realizer{r'}}{r_{\Realizer{r}}}] = [\Pair{\Realizer{r}}{r_{\Realizer{r}}}]} \\
    &\Realizer{a \; r'} \Vdash_{\subQuot{S}} f([\Pair{\Realizer{r}}{r_{\Realizer{r}}}]) \\
    &\doteq \reasoning{f([\Pair{\Realizer{r}}{r_{\Realizer{r}}}])\text{ is represented by }\Pair{\Realizer{s}}{r_{\Realizer{s}}}}\\
    &\Realizer{a \; r'} \Vdash_{\subQuot{S}} [\Pair{\Realizer{s}}{r_{\Realizer{s}}}] \\
    &\doteq \reasoning{\text{by definition of subquotient realizability}} \\
    &\Realizer{a \; r'} \approx_S \Realizer{s}
  \end{align*}
  holds. By symmetry and transitivity of $S$, $\Realizer{a \; r} \approx_S \Realizer{a \; r'}$ holds. This concludes the proof.
\end{proof}

\begin{lemma}[\formalisation{https://rahulc29.github.io/realizability/Realizability.PERs.SubQuotient.html\#7563}]\label{lemma:tracker-2-constant}
  If realizers $\Realizer{a}, \Realizer{b} : \PCA$ track a morphism of subquotient assemblies $f : \subQuot{R} \to \subQuot{S}$ then they track the same per morphism $R \to S$.
\end{lemma}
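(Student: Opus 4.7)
The plan is to reduce the claim to showing that $\Realizer{a}$ and $\Realizer{b}$ are equivalent as trackers in the sense of [Definition~\ref{def:equivalence-of-trackers}], and then to exploit the fact that $\Realizer{a \; r}$ and $\Realizer{b \; r}$ must both realize the same element of $\subQuot{S}$, namely the image of $[\Pair{\Realizer{r}}{r_{\Realizer{r}}}]$ under $f$.

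\textbf{Step 1.} Invoke [Lemma~\ref{lemma:morphism-tracker-per-tracker}] to obtain per morphisms represented by $\Realizer{a}$ and $\Realizer{b}$ respectively; these are the equivalence classes $[\Realizer{a}]$ and $[\Realizer{b}]$ in the set quotient used to define per morphisms [Definition~\ref{def:per-morphism}]. Our goal reduces to showing $[\Realizer{a}] = [\Realizer{b}]$, and by the characterisation of equality in a set quotient this reduces further to showing that $\Realizer{a}$ and $\Realizer{b}$ are equivalent as trackers from $R$ to $S$.

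\textbf{Step 2.} Fix $\Realizer{r} : \PCA$ together with $r_{\Realizer{r}} : \Realizer{r} \approx_R \Realizer{r}$. We must show $\Realizer{a \; r} \approx_S \Realizer{b \; r}$. Observe that $\Realizer{r} \Vdash_{\subQuot{R}} [\Pair{\Realizer{r}}{r_{\Realizer{r}}}]$ holds by construction of the subquotient realizability relation [Construction~\ref{def:realizability-for-subquotient}]. Since both $\Realizer{a}$ and $\Realizer{b}$ track $f$, the realizers $\Realizer{a \; r}$ and $\Realizer{b \; r}$ both realize $f([\Pair{\Realizer{r}}{r_{\Realizer{r}}}]) : \subQuot{S}$.

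\textbf{Step 3.} To extract $\approx_S$-related content from this common realizability fact, apply the proposition-level elimination principle for set-quotients to $f([\Pair{\Realizer{r}}{r_{\Realizer{r}}}])$: assume it is represented by some $\Pair{\Realizer{s}}{r_{\Realizer{s}}} : \Dom{S}$. Unfolding the definition of subquotient realizability gives $\Realizer{a \; r} \approx_S \Realizer{s}$ and $\Realizer{b \; r} \approx_S \Realizer{s}$. By symmetry and transitivity of $S$ we conclude $\Realizer{a \; r} \approx_S \Realizer{b \; r}$, as required.

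The substantive reasoning has already been done in [Lemma~\ref{lemma:morphism-tracker-per-tracker}]; there is no real obstacle here. The only slightly delicate point is the bookkeeping around the set-quotient eliminations: the goal after Step 1 is a proposition (an equality in a set), so the proposition-level eliminations in Steps 2 and 3 apply without further coherence obligations, and the calculation concludes cleanly.
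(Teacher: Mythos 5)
Your proof is correct and follows essentially the same route as the paper: reduce to equivalence of trackers, apply proposition-level set-quotient elimination to $f([\Pair{\Realizer{r}}{r_{\Realizer{r}}}])$ to obtain a representative $\Pair{\Realizer{s}}{r_{\Realizer{s}}}$, and conclude by symmetry and transitivity of $S$. No gaps.
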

\begin{proof}
  Let \Realizer{a} and \Realizer{b} be trackers for $f : \subQuot{R} \to \subQuot{S}$. To show that they track the same per morphism, it suffices to show that they are equivalent as trackers.
  
  We need to show that for any $\Pair{\Realizer{r}}{r_{\Realizer{r}}} : \Dom{R}$ we have that $\Realizer{a \; r} \approx_S \Realizer{b \; r}$ holds.

  This proof mimics the proof of the previous lemma [Lemma~\ref{lemma:morphism-tracker-per-tracker}].

  We apply the proposition-level elimination rule for set-quotients on $f([\Pair{\Realizer{r}}{r_{\Realizer{r}}}])$. We assume $\Pair{\Realizer{s}}{r_{\Realizer{s}}} : \Dom{S}$ represents $f([\Pair{\Realizer{r}}{r_{\Realizer{r}}}])$.

  Since both \Realizer{a} and \Realizer{b} are trackers for $f$, we have that $\Realizer{a \; r} \approx_S \Realizer{s}$ and $\Realizer{b \; r} \approx_S \Realizer{s}$ hold, so that $\Realizer{a \; r} \approx_S \Realizer{b \; r}$ holds as well.
\end{proof}

\begin{construction}
  For any morphism of subquotient assemblies $f : \subQuot{R} \to \subQuot{S}$ we can find a per morphism $\perify{f} : R \to S$ such that any tracker of $f$ is also a tracker of \perify{f}. 
\end{construction}
\begin{proof}
  We wish to map a tracker \Realizer{a} for $f$ to the per morphism tracked by \Realizer{a} [Lemma~\ref{lemma:morphism-tracker-per-tracker}]. Since the actual choice of tracker is immaterial [Lemma~\ref{lemma:tracker-2-constant}], we can apply the set-level recursion rule for propositional truncation on the witness of $f$ being tracked, giving rise to the map $\perify{f} : R \to S$.
\end{proof}

\begin{lemma}[\formalisation{https://rahulc29.github.io/realizability/Realizability.PERs.SubQuotient.html\#10985}]\label{lemma:canonical-inhabitant-fibre-space-subquotient}
  For any $f : \subQuot{R} \to \subQuot{S}$ we have that \perify{f} is an inhabitant of the fibre space over $f$, that is, $\subQuot{\perify{f}} = f$ holds.
\end{lemma}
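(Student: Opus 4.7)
The plan is to show pointwise equality of the underlying maps and then unfold both sides to realize they are represented by the same partial equivalence class. By Lemma~\ref{lemma:assembly-morphism-extensionality}, it suffices to prove $\subQuot{\perify{f}}(r) = f(r)$ for every $r : \subQuot{R}$. Since the equality in the codomain lives in a set (the subquotient is a set), we can use the proposition-level elimination rule for set-quotients and assume $r = [\Pair{\Realizer{r}}{r_{\Realizer{r}}}]$ for some $\Pair{\Realizer{r}}{r_{\Realizer{r}}} : \Dom{R}$.

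Next I would unfold $\perify{f}$. By its construction, whenever we have an explicit tracker \Realizer{a} of $f$, $\perify{f}$ is represented by \Realizer{a} viewed as a per tracker via [Lemma~\ref{lemma:morphism-tracker-per-tracker}]. Our goal is a proposition, so I can use the propositional elimination rule on the truncated witness that $f$ is tracked to extract such a representative \Realizer{a}. Then $\subQuot{\perify{f}}([\Pair{\Realizer{r}}{r_{\Realizer{r}}}])$ unfolds definitionally to the partial equivalence class represented by $\Pair{\Realizer{a \; r}}{\_}$, where the self-relatedness witness comes from the fact that $\Realizer{a \; r} \approx_S \Realizer{a \; r}$ (derived shortly).

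To compare this with $f([\Pair{\Realizer{r}}{r_{\Realizer{r}}}])$, I apply the proposition-level elimination rule for set-quotients on $f([\Pair{\Realizer{r}}{r_{\Realizer{r}}}])$, picking a representative $\Pair{\Realizer{s}}{r_{\Realizer{s}}} : \Dom{S}$. Because \Realizer{a} tracks $f$ and $\Realizer{r} \Vdash_{\subQuot{R}} [\Pair{\Realizer{r}}{r_{\Realizer{r}}}]$ holds, we get $\Realizer{a \; r} \Vdash_{\subQuot{S}} [\Pair{\Realizer{s}}{r_{\Realizer{s}}}]$, which by definition of subquotient realizability is exactly $\Realizer{a \; r} \approx_S \Realizer{s}$. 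From this, symmetry and transitivity of $S$ immediately give $\Realizer{a \; r} \approx_S \Realizer{a \; r}$ (which furnishes the needed membership of \Realizer{a \; r} in $\Dom{S}$) and also show that $[\Pair{\Realizer{a \; r}}{\_}] = [\Pair{\Realizer{s}}{r_{\Realizer{s}}}]$ in $\subQuot{S}$ via the set-quotient relation, finishing the equality.

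There is no real obstacle here, only the bookkeeping of nested eliminations. The one thing to be careful about is making sure each elimination lands in a type of the right $n$-truncation level: pointwise equality of assembly morphisms is a proposition, the carrier of $\subQuot{S}$ is a set, and the witness-extraction for the tracker of $f$ happens into a proposition, so all the elimination principles apply without coherence obligations beyond those already discharged in the constructions of \subQuot{\_} and \perify{\_}.
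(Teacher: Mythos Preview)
Your proof is correct and follows essentially the same route as the paper: reduce to pointwise equality via assembly morphism extensionality, eliminate the set-quotient on $r$, extract a tracker for $f$ by propositional-truncation elimination, unfold $\subQuot{\perify{f}}$ on a representative to $[\Realizer{a\;r}]$, then eliminate the set-quotient on $f([\Pair{\Realizer{r}}{r_{\Realizer{r}}}])$ and use the tracking property to conclude $\Realizer{a\;r} \approx_S \Realizer{s}$. The only cosmetic difference is that you explicitly spell out the derivation of $\Realizer{a\;r} \approx_S \Realizer{a\;r}$ and the truncation-level bookkeeping, which the paper leaves implicit.
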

\begin{proof}
  This proof is simple, but we need to get our hands dirty with a lot of elimination rules!

  By assembly morphism extensionality [Lemma~\ref{lemma:assembly-morphism-extensionality}], it suffices to show that the underlying maps are pointwise equal.

  Let \Pair{\Realizer{r}}{r_{\Realizer{r}}} be an arbitrary term of type \Dom{R}. We will show that $\subQuot{\perify{f}}([\Pair{\Realizer{r}}{r_{\Realizer{r}}}]) = f([\Pair{\Realizer{r}}{r_{\Realizer{r}}}])$ holds. By the proposition-level elimination principle for set-quotients, this is sufficient.

  We apply the elimination rule for propositional truncations on the witness of $f$ being tracked. Assume that \Realizer{t} is a tracker for $f$, so that $\perify{f}$ is the per morphism tracked by \Realizer{t}.

  We calculate
  \begin{align*}
    &\subQuot{\perify{f}}([\Pair{\Realizer{r}}{r_{\Realizer{r}}}]) \\
    &\doteq \reasoning{\Realizer{t}\text{ tracks }f} \\
    &\subQuot{[ \Realizer{t} ]}([\Pair{\Realizer{r}}{r_{\Realizer{r}}}]) \\
    &\doteq \reasoning{\text{by the definition of }\subQuot{\_}\text{[Construction~\ref{constr:subquotient-assembly-morphism-from-per-morphism}]}} \\
    &[ \Realizer{t \; r} ]
  \end{align*}
  to reduce our goal to showing that $[ \Realizer{t \; r} ] = f([\Pair{\Realizer{r}}{r_{\Realizer{r}}}]$ holds.

  To get rid of this final obstacle, we again apply proposition-level set-quotient elimination on $f([\Pair{\Realizer{r}}{r_{\Realizer{r}}}])$. We assume that $\Pair{\Realizer{s}}{r_{\Realizer{s}}} : \Dom{S}$ represents $f([\Pair{\Realizer{r}}{r_{\Realizer{r}}}]$.

  Hopefully, the reader has recognised the pattern. Since \Realizer{t} tracks $f$ we have that $\Realizer{t \; r} \Vdash_{\subQuot{S}} f([\Pair{\Realizer{r}}{r_{\Realizer{r}}}]) \doteq \Realizer{t \; r} \Vdash_{\subQuot{S}} [\Pair{\Realizer{s}}{r_{\Realizer{s}}}] \doteq \Realizer{t \; r} \approx_S \Realizer{s}$ holds. This means that the partial equivalence classes $[\Realizer{t \; r}]$ and $[\Realizer{s}]$ are equal, and our proof is complete.
\end{proof}

Since the action of the subquotient functor on per morphisms is an embedding of types [Lemma~\ref{lemma:subquotient-action-morphisms-embedding}], and every fibre space is inhabited [Lemma~\ref{lemma:canonical-inhabitant-fibre-space-subquotient}]; we have shown:

\begin{theorem}[\formalisation{https://rahulc29.github.io/realizability/Realizability.PERs.SubQuotient.html\#12415}]\label{theorem:subquotient-fully-faithful}
  The subquotient functor is \emph{fully faithful}, that is, the action of the subquotient functor on per morphisms is an equivalence of types.
\end{theorem}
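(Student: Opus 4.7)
The plan is to combine the two preceding lemmas by appealing to the standard HoTT characterisation that a map is an equivalence precisely when all its fibres are contractible. Since contractibility of a type is equivalent to it being simultaneously a proposition and inhabited, and we have one lemma supplying each half, the conclusion will follow with essentially no new work.

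More concretely, I would fix pers $R$ and $S$ and consider the action-on-morphisms map
\[
\subQuot{\_} : \Hom{\PER}{R}{S} \to \Hom{\Asm}{\subQuot{R}}{\subQuot{S}}.
\]
For an arbitrary assembly morphism $f : \subQuot{R} \to \subQuot{S}$, I would look at the fibre of this map over $f$. By [Lemma~\ref{lemma:subquotient-action-morphisms-embedding}], this fibre is a proposition. By [Lemma~\ref{lemma:canonical-inhabitant-fibre-space-subquotient}], the element $\perify{f}$ together with the identification $\subQuot{\perify{f}} = f$ gives an inhabitant of this fibre. An inhabited proposition is contractible, so the fibre over $f$ is contractible. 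Since $f$ was arbitrary, every fibre is contractible, which is exactly the definition of $\subQuot{\_}$ being an equivalence of types.

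There is no real obstacle here; the two substantive lemmas have already done all of the work. The only mild subtlety is being careful about which notion of ``equivalence of types'' one uses — the contractible-fibres formulation makes the argument immediate, whereas constructing a strict inverse by hand would require recapitulating the content of [Lemma~\ref{lemma:canonical-inhabitant-fibre-space-subquotient}] (defining $\perify{\_}$ as the inverse) and then separately verifying both round-trips, with the $\perify{\subQuot{g}} = g$ direction requiring an appeal to [Lemma~\ref{lemma:subquotient-action-morphisms-embedding}] anyway. So I would explicitly invoke the fact that propositional plus pointed implies contractible, and conclude from the fibrewise characterisation of equivalences.
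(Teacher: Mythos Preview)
Your proposal is correct and matches the paper's approach exactly: the paper also concludes the theorem directly from the two cited lemmas, noting that an embedding with inhabited fibres is an equivalence. You have simply spelled out the contractible-fibres justification a bit more explicitly than the paper does.
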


\section{The Subquotient Functor is an Equivalence of Categories}
\label{sec:modest-sets-equivalent-to-pers}

\sloppy Our strategy for building the equivalence is to show that the subquotient functor $\subQuot{\_} : \PER \to \Mod$ we just defined in the previous section [Section~\ref{sec:subquotient-functor}] is a \textbf{split essentially surjective functor}. To do this, we must give, for any modest set $M$, a per $R$ such that $M$ is isomorphic to $\subQuot{R}$. We call this $R$ the \textbf{canonical per} of the modest set $M$ and denote it $\canonicalPER{M}$. Much of this section is devoted to constructing the forward $M \to \subQuot{\canonicalPER{M}}$ and backward $\subQuot{\canonicalPER{M}} \to M$ directions of the isomorphism $M \cong \subQuot{\canonicalPER{M}}$.

\subsection{Canonical PER of a Modest Set}
\label{sec:canonical-per-of-a-modest-set}

Before we define the canonical per, we prove a helpful lemma.

\begin{lemma}\label{lemma:canonical-per-type-prop}
  For any modest set $M$, and realizers $\Realizer{a} : \PCA$ and $\Realizer{b} : \PCA$, the type $(x : M) \times \Realizer{a} \Vdash_M x \times \Realizer{b} \Vdash_M x$ is a proposition.
\end{lemma}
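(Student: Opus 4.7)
The plan is to verify the propositionality criterion directly: show that any two inhabitants of the $\Sigma$-type are equal. Suppose we are given $(x_1, p_1, q_1)$ and $(x_2, p_2, q_2)$ where $p_i$ witnesses $\Realizer{a} \Vdash_M x_i$ and $q_i$ witnesses $\Realizer{b} \Vdash_M x_i$. The realizer $\Realizer{a}$ then realizes both $x_1$ and $x_2$, so feeding this shared realizer into the modesty hypothesis of $M$ --- wrapping the pair $(p_1, p_2)$ with the propositional truncation constructor $\ptInc{\_}$ to match the $\exists$ appearing in Definition~\ref{def:modesty} --- yields a path $x_1 =_M x_2$.

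With this path in hand, the equality of the triples reduces, via the standard characterization of paths in $\Sigma$-types, to producing equalities between transported witnesses. Since the realizability relation of an assembly is valued in $\Prop{}$, both $\Realizer{a} \Vdash_M x_2$ and $\Realizer{b} \Vdash_M x_2$ are propositions, so the transported versions of $p_1, q_1$ are automatically equal to $p_2, q_2$ respectively. Packaging these component equalities gives the desired identification of the two triples.

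I do not anticipate any genuine obstacle. The only mildly delicate point is the bookkeeping around the transport of the realizability witnesses across the path $x_1 = x_2$; but because the carrier of $M$ is a set and each realizability proposition is a $-1$-type, this transport is forced and no higher coherence needs to be tracked. One could alternatively phrase the proof by showing that the subtype $\{x : M \mid \Realizer{a} \Vdash_M x\}$ is already a proposition by modesty and that the extra factor $\Realizer{b} \Vdash_M x$ is fiberwise propositional, but the direct argument above is cleaner.
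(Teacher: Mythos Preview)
Your proof is correct and matches the paper's argument: reduce propositionality of the $\Sigma$-type to equality of the first components (using that the realizability fibres are propositions), then invoke modesty with the shared realizer $\Realizer{a}$ to obtain $x_1 = x_2$. The paper phrases this more tersely and remarks that in fact both $\Realizer{a}$ and $\Realizer{b}$ serve as shared realizers, but the substance is identical.
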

\begin{proof}
  Since $\_ \Vdash_M \_$ is a proposition, it suffices to show that any $x : M$ and $x' : M$ that are realized by both \Realizer{a} and \Realizer{b} are necessarily equal. But this follows immediately from the modesty of $M$, since $x$ and $x'$ share not one but two realizers!
\end{proof}

\begin{definition}[\formalisation{https://rahulc29.github.io/realizability/Realizability.Modest.CanonicalPER.html\#1890}]
  The canonical per of a modest set $M$ relates \Realizer{a} and \Realizer{b} if $(x : M) \times \Realizer{a} \Vdash_M x \times \Realizer{b} \Vdash_M x$ holds.
\end{definition}
\begin{proof}
  The type is a proposition [Lemma~\ref{lemma:canonical-per-type-prop}]. It remains to show symmetry and transitivity.

  Symmetry obviously holds by a simple re-arrangement of the data.

  For transitivity, consider \Realizer{a}, \Realizer{b} and \Realizer{c} elements of \PCA~ such that we have $x : M$ realized by \Realizer{a} and \Realizer{b}, and $x' : M$ realized by \Realizer{b} and \Realizer{c}. Note that $x = x'$ holds since $x$ and $x'$ share a realizer \Realizer{b}. As such, we have that $\Realizer{a} \Vdash x'$ and $\Realizer{c} \Vdash x$ hold as well.

  We need to choose a $x'' : M$ that is realized by both \Realizer{a} and \Realizer{c}. Obviously, we can choose $x'' := x$ for this.
\end{proof}

\subsection{The Forward Direction $M \to \subQuot{\canonicalPER{M}}$}

To define the forward direction, we would need to introduce an element of \subQuot{\canonicalPER{M}},
which would require us to furnish an element $x : M$ along with a realizer \Realizer{a} that realizes $x$.
Of course, we have an input $M$, so the $x : M$ is taken care of, but how do we possibly find an actual realizer for arbitrary $x : M$? We do not!
The crucial observation is that the choice of realizer does \emph{not} matter --- the main map we will be defining will turn out to be na\"ively constant. This will allow us to appeal to the recursion principle of propositional truncation to define our underlying map. Showing that this map is tracked is then straightforward.

\begin{definition}[\formalisation{https://rahulc29.github.io/realizability/Realizability.Modest.SubQuotientCanonicalPERIso.html\#4184}]\label{def:main-map-forward}
  For any $x : M$, we define a function
  \begin{align*}
    &\fwdMain_x : (\Realizer{a} : \PCA) \times \Realizer{a} \Vdash x \to \subQuot{\canonicalPER{M}} \\
    &\fwdMain_x \Define \lambda \Pair{\Realizer{a}}{e_{\Realizer{a}}[x]} \to [ \langle \Realizer{a} , \langle x , e_{\Realizer{a}}[x] , e_{\Realizer{a}}[x]  \rangle \rangle ]
  \end{align*}
  mapping any \Realizer{a} that realizes $x$ to the partial equivalence class determined by \Realizer{a}.
  %Well-formedness is ensured since $\langle x , e_{\Realizer{a}}[x] , e_{\Realizer{a}}[x]  \rangle$ witnesses that $\Realizer{a} \approx_{\canonicalPER{M}} \Realizer{a}$ holds.
\end{definition}

\begin{lemma}[\formalisation{https://rahulc29.github.io/realizability/Realizability.Modest.SubQuotientCanonicalPERIso.html\#4303}]
  For any $x : M$, the map $\fwdMain_x \;$ is na\"ively constant.
\end{lemma}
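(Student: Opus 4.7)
The plan is to unfold the definition of $\fwdMain_x$ and reduce the required equality in $\subQuot{\canonicalPER{M}}$ to a statement that both chosen realizers represent the same partial equivalence class under $\canonicalPER{M}$. Concretely, fix two pairs $\Pair{\Realizer{a}}{e_{\Realizer{a}}[x]}$ and $\Pair{\Realizer{b}}{e_{\Realizer{b}}[x]}$ in the domain type $(\Realizer{r} : \PCA) \times \Realizer{r} \Vdash x$. Unfolding [Definition~\ref{def:main-map-forward}], the goal becomes
\[
  [\langle \Realizer{a}, \langle x, e_{\Realizer{a}}[x], e_{\Realizer{a}}[x] \rangle \rangle] = [\langle \Realizer{b}, \langle x, e_{\Realizer{b}}[x], e_{\Realizer{b}}[x] \rangle \rangle]
\]
in the set quotient $\subQuot{\canonicalPER{M}}$.

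Next I would apply the defining property of the set-quotient inclusion $[\_]$: it suffices to exhibit evidence that the underlying realizers are related by $\canonicalPER{M}$, that is, that $\Realizer{a} \approx_{\canonicalPER{M}} \Realizer{b}$ holds. By the definition of $\canonicalPER{M}$, we must supply some $y : M$ together with proofs that both $\Realizer{a}$ and $\Realizer{b}$ realize $y$.

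The key observation is that we already have such a witness at hand --- namely $x$ itself, with realizability certificates $e_{\Realizer{a}}[x]$ and $e_{\Realizer{b}}[x]$ that come with the input pairs. Thus the triple $\langle x, e_{\Realizer{a}}[x], e_{\Realizer{b}}[x]\rangle$ inhabits $(y : M) \times \Realizer{a} \Vdash_M y \times \Realizer{b} \Vdash_M y$, which is exactly the data required. Substituting back closes the goal. There is no real obstacle here: since the only moving parts in $\fwdMain_x$ are the realizers used to form the representative, and the canonical per was precisely cooked up to identify any two such realizers of a common element of $M$ [Lemma~\ref{lemma:canonical-per-type-prop}], the naive constancy is essentially immediate from unfolding.
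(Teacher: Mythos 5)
Your proposal is correct and follows exactly the paper's argument: unfold $\fwdMain_x$, reduce the equality of partial equivalence classes to $\Realizer{a} \approx_{\canonicalPER{M}} \Realizer{b}$ via the quotient inclusion, and witness the relation with $x$ itself together with the two realizability certificates already in hand. The closing citation of Lemma~\ref{lemma:canonical-per-type-prop} is a harmless aside (propositionality is not needed for constancy), but otherwise this matches the paper's proof step for step.
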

\begin{proof}
  We need to show that for any $\Pair{\Realizer{r}}{e_{\Realizer{r}}[x]}$ and $\Pair{\Realizer{s}}{e_{\Realizer{s}}[x]}$ in $(\Realizer{a} : \PCA) \times \Realizer{a} \Vdash x$ we have that the partial equivalence classes $[ \langle \Realizer{r} , \langle x , e_{\Realizer{r}}[x] , e_{\Realizer{r}}[x] \rangle \rangle ]$ and $[ \langle \Realizer{s}, \langle x , e_{\Realizer{s}}[x], e_{\Realizer{s}}[x] \rangle \rangle ]$ are equal, for which it suffices to show that $\Realizer{r} \approx_{\canonicalPER{M}} \Realizer{s}$ holds. We need to furnish an element of $M$ that is realized by both \Realizer{r} and \Realizer{s}. By assumption, $x$ is a valid choice.
\end{proof}

\begin{construction}[\formalisation{https://rahulc29.github.io/realizability/Realizability.Modest.SubQuotientCanonicalPERIso.html\#4069}]\label{constr:fwd-underlying}
  We can construct the map $\fwd : M \to \subQuot{\canonicalPER{M}}$.
\end{construction}
\begin{proof}
  We need to map a given $x : M$ to a partial equivalence class $\mathsf{SubQuotient}\\ (\canonicalPER{M})$.

  Since $M$ is an assembly, we have a witness $e_x : \exists (\Realizer{a} : \PCA) \; \Realizer{a} \Vdash_M x$ that a realizer for $x$ exists. Since $\fwdMain_x$ is a na\"ively constant map, we can apply the set-level recursion principle for propositional truncation on $e_x$ and $\fwdMain_x$.

  This gives us an actual partial equivalence class \subQuot{\canonicalPER{M}} and thus our construction is complete.
\end{proof}

\sloppy Notably, when $e : \exists (\Realizer{a} : \PCA) \; \Realizer{a} \Vdash_M x$ is of the form $\ptInc{\Pair{\Realizer{a}}{e_{\Realizer{a}}[x]}}$, we have that $\fwd(x) \doteq \; \fwdMain_x (\Pair{\Realizer{a}}{e_{\Realizer{a}}[x]}) \doteq \; [ \langle \Realizer{a} , \langle x , e_{\Realizer{a}}[x] , e_{\Realizer{a}}[x]  \rangle \rangle ]$ holds by definition.

\begin{lemma}[\formalisation{https://rahulc29.github.io/realizability/Realizability.Modest.SubQuotientCanonicalPERIso.html\#4069}]\label{lemma:fwd-tracked}
  The map $\fwd \;$is tracked.
\end{lemma}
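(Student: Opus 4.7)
The plan is to exhibit the identity combinator $\Realizer{i}$ as a tracker for $\fwd$. Suppose $\Realizer{a} : \PCA$ realizes some $x : M$, witnessed by $e_{\Realizer{a}}[x]$; I must show that $\Realizer{i \; a}$ realizes $\fwd(x)$. Since $\Realizer{i \; a} = \Realizer{a}$, it suffices to show that $\Realizer{a} \Vdash_{\subQuot{\canonicalPER{M}}} \fwd(x)$.

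The subtlety is that $\fwd(x)$ is defined by propositional truncation elimination on the anonymous witness $e_x : \exists (\Realizer{b} : \PCA)\, \Realizer{b} \Vdash_M x$ packaged with $M$, so its concrete representative is not directly available. However, the subquotient realizability relation is a proposition, so I can apply the elimination principle of propositional truncation to $e_x$ and assume without loss of generality that it is of the form $\ptInc{\Pair{\Realizer{a'}}{e_{\Realizer{a'}}[x]}}$ for some explicit realizer $\Realizer{a'}$ of $x$. By the computation rule noted in the remark immediately after [Construction~\ref{constr:fwd-underlying}], we then have $\fwd(x) \doteq [\langle \Realizer{a'}, \langle x, e_{\Realizer{a'}}[x], e_{\Realizer{a'}}[x] \rangle \rangle]$, and by the definition of the subquotient realizability relation the goal unfolds to $\Realizer{a} \approx_{\canonicalPER{M}} \Realizer{a'}$.

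It now suffices to exhibit some $y : M$ realized by both $\Realizer{a}$ and $\Realizer{a'}$, and choosing $y \Define x$ works immediately, since $x$ is realized by $\Realizer{a}$ by hypothesis and by $\Realizer{a'}$ by the assumed form of $e_x$. The main obstacle here is purely bureaucratic: one has to dispatch the opaque existence witness appearing in the definition of $\fwd$ before $\fwd(x)$ reduces to anything computable, but this is licensed precisely because the goal is a proposition. Once that is done, the relevant equivalence in $\canonicalPER{M}$ falls out from the very element $x$ we started with.
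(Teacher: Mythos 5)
Your proposal is correct and matches the paper's proof essentially verbatim: exhibit the identity combinator as the tracker, eliminate the propositional truncation on the existence witness $e_x$ (licensed because the goal is a proposition), reduce the goal to $\Realizer{a} \approx_{\canonicalPER{M}} \Realizer{a'}$ via the computation rule for $\fwd$, and discharge it by observing that $x$ is realized by both. No gaps.
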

\begin{proof}
  We need to show that there exists some $\Realizer{t} : \PCA$ such that for any $x : M$ and for any \Realizer{a} that realizes $x$ we have that \Realizer{t \; a} realizes $\fwd(x)$.
  
  We claim that the identity combinator \Realizer{i} is a tracker for \fwd.

  Let us assume we have been given $x : M$ and a realizer \Realizer{a} for $x$. We need to show that $\Realizer{i \; a} = \Realizer{a}$ realizes $\fwd(x)$.

  Since $M$ is an assembly, we have evidence $e_x$ that $x$ has a realizer. Recall that $\fwd(x)$ is defined by recursion on this very $e_x$. By the proposition-level elimination principle for propositional truncations, it is enough to show that \Realizer{a} realizes $\fwd(x)$ when $e_x$ is of the form \ptInc{\Pair{\Realizer{b}}{e_{\Realizer{b}}[x]}} where $\Realizer{b} : \PCA$ is a realizer and $e_{\Realizer{b}}[x]$ is evidence that $b \Vdash x$ holds.

  By calculating
  \begin{align*}
    &\Realizer{a} \Vdash_{\subQuot{\canonicalPER{M}}} \fwd(x) \\
    &\doteq \reasoning{e_x\text{ is of the form }\ptInc{\Pair{\Realizer{b}}{e_{\Realizer{b}}[x]}}} \\
    &\Realizer{a} \Vdash_{\subQuot{\canonicalPER{M}}} \fwdMain_x(\Pair{b}{e_{\Realizer{b}}[x]}) \\
    &\doteq \reasoning{\text{by definition of }\fwdMain} \\
    &\Realizer{a} \Vdash_{\subQuot{\canonicalPER{M}}} [ \langle \Realizer{b} , \langle x , e_{\Realizer{b}}[x] , e_{\Realizer{b}}[x]  \rangle \rangle ] \\
    &\doteq \reasoning{\text{by definition of subquotient realizability [Construction~\ref{def:realizability-for-subquotient}]}} \\
    &\Realizer{a} \approx_{\canonicalPER{M}} \Realizer{b}
  \end{align*}
  we can refine our goal to showing that $\Realizer{a} \approx_{\canonicalPER{M}} \Realizer{b}$ holds. This is trivial, since $x$ is realized by both \Realizer{a} and \Realizer{b}.
\end{proof}

Thus, we have shown :

\begin{corollary}[\formalisation{https://rahulc29.github.io/realizability/Realizability.Modest.SubQuotientCanonicalPERIso.html\#4069}]
  \sloppy We have the assembly morphism $\fwd : M \to \subQuot {\canonicalPER{M}}$ with underlying map $\fwd \;$[Construction~\ref{constr:fwd-underlying}] and the proof of [Lemma~\ref{lemma:fwd-tracked}] as evidence of $\fwd \;$being tracked.
\end{corollary}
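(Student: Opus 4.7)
The plan is to observe that this corollary is essentially a packaging statement: recall that an assembly morphism $A \to B$ was defined as a pair $\Pair{f}{e_f}$ consisting of an underlying function $f : A \to B$ together with evidence $e_f$ that a tracker for $f$ exists. So to establish the corollary, I simply need to assemble the two ingredients that have already been produced.

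First, I would take the underlying function $\fwd : M \to \subQuot{\canonicalPER{M}}$ built in Construction~\ref{constr:fwd-underlying}, which was defined by eliminating the propositional-truncation witness $e_x : \exists (\Realizer{a} : \PCA) \; \Realizer{a} \Vdash_M x$ along the na\"ively constant map $\fwdMain_x$. Second, I would pair this with the tracking evidence furnished by Lemma~\ref{lemma:fwd-tracked}, which showed that the identity combinator $\Realizer{i}$ tracks $\fwd$ (and hence that a tracker exists, obtained by applying $\ptInc{\_}$ to the pair $\Pair{\Realizer{i}}{\text{--}}$).

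The proof is thus a one-line assembly: the required assembly morphism is $\Pair{\fwd}{\ptInc{\Pair{\Realizer{i}}{e}}}$ where $e$ is the verification carried out in Lemma~\ref{lemma:fwd-tracked}. There is no real obstacle here; the substantive content was already discharged by the construction and the preceding lemma. The only minor care I would take is to note that the target $\subQuot{\canonicalPER{M}}$ is indeed an assembly (so that speaking of an assembly morphism into it is sensible), which follows from Definition~\ref{def:subquotient-assembly}.
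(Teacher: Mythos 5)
Your proposal is correct and matches the paper exactly: the corollary is stated without proof precisely because it is the immediate packaging of the underlying map from Construction~\ref{constr:fwd-underlying} with the tracking evidence from Lemma~\ref{lemma:fwd-tracked} into the pair that constitutes an assembly morphism. Nothing further is required.
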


\subsection{The Backward Direction $\subQuot{\canonicalPER{M}} \to M$}
\label{sec:backwards-direction}

\sloppy We shall define a function $\bwdMain : \Dom{\canonicalPER{M}} \to M$ and then show that the appropriate coherence conditions hold for $\bwdMain$, so that we can apply the set-level recursion principle for set quotients to get a function $\bwd : \subQuot{\canonicalPER{M}} \to M$. After that, it is easy to show that $\bwd \;$is tracked and thus we will get a assembly morphism from \subQuot{\canonicalPER{M}} to $M$.

\begin{definition}[\formalisation{https://rahulc29.github.io/realizability/Realizability.Modest.SubQuotientCanonicalPERIso.html\#2716}]
  The function $\bwdMain : \Dom{\canonicalPER{M}} \to M$ is defined by
  \begin{align*}
    &\bwdMain : \Dom{\canonicalPER{M}} \to M \\
    &\bwdMain \Define \lambda \Pair{\Realizer{a}}{\langle x, e_{\Realizer{a}}[x], e'_{\Realizer{a}}[x] \rangle} \to x
  \end{align*}
\end{definition}

\begin{lemma}[\formalisation{https://rahulc29.github.io/realizability/Realizability.Modest.SubQuotientCanonicalPERIso.html\#2813}]\label{lemma:bwdMain}
  The function $\bwdMain : \Dom{\canonicalPER{M}} \to M$ maps elements of \Dom{\canonicalPER{M}} that are related by the \canonicalPER{M} relation to equal elements of $M$.
\end{lemma}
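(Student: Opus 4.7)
The plan is to unfold everything and reduce to modesty of $M$ applied twice. Suppose we are given two elements $\Pair{\Realizer{a}}{\langle x, e_{\Realizer{a}}[x], e'_{\Realizer{a}}[x] \rangle}$ and $\Pair{\Realizer{b}}{\langle y, e_{\Realizer{b}}[y], e'_{\Realizer{b}}[y] \rangle}$ of $\Dom{\canonicalPER{M}}$ that are related by $\canonicalPER{M}$. By the definition of $\bwdMain$, the task reduces to showing $x = y$ in $M$.

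Unfolding the definition of the canonical per, the assumption $\Realizer{a} \approx_{\canonicalPER{M}} \Realizer{b}$ gives us a witness of the type $(z : M) \times \Realizer{a} \Vdash_M z \times \Realizer{b} \Vdash_M z$. Since $M$ is a set, the equality $x = y$ is a proposition, so even though this witness is obtained by unfolding a propositional datum, we are free to project out its components: we get some $z : M$ together with $\Realizer{a} \Vdash_M z$ and $\Realizer{b} \Vdash_M z$.

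Now I invoke modesty of $M$ twice. First, $x$ and $z$ share the realizer $\Realizer{a}$ (by $e_{\Realizer{a}}[x]$ and the projected witness), so modesty yields $x = z$. Second, $y$ and $z$ share the realizer $\Realizer{b}$ (by $e_{\Realizer{b}}[y]$ and the projected witness), so modesty yields $y = z$. Transitivity of equality in $M$ then gives $x = y$, which completes the proof.

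There is no real obstacle here; the only mild subtlety is that modesty is stated as an \emph{existence} of a shared realizer implying equality, so strictly speaking one wraps the pair $\langle \Realizer{a}, \langle e_{\Realizer{a}}[x], \text{proj} \rangle \rangle$ into the propositional truncation before applying $\mathsf{isModest}$, and likewise for the second application. Since the conclusion $x = y$ is a proposition (as $M$ is a set), all of these uses of propositional truncation eliminate without fuss.
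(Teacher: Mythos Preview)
Your proof is correct and follows essentially the same approach as the paper: unfold the relation to obtain a common $z$ realized by both $\Realizer{a}$ and $\Realizer{b}$, then apply modesty twice (sharing $\Realizer{a}$, then sharing $\Realizer{b}$) and conclude by transitivity. Your extra remarks about wrapping into the propositional truncation before invoking $\mathsf{isModest}$ are accurate bookkeeping that the paper simply leaves implicit.
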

\begin{proof}
  Let us say we have $\Pair{\Realizer{a}}{\langle x, e_{\Realizer{a}}[x], e'_{\Realizer{a}}[x] \rangle}$ and $\Pair{\Realizer{b}}{\langle x', e_{\Realizer{b}}[x'], e'_{\Realizer{b}}[x'] \rangle}$ in $\Dom{\canonicalPER{M}}$ along with evidence $\langle x'', e_{\Realizer{a}}[x''], e_{\Realizer{b}}[x''] \rangle$ that they are related by the \canonicalPER{M} relation. We need to show that $\bwdMain(\Pair{\Realizer{a}}{\langle x, e_{\Realizer{a}}[x], e'_{\Realizer{a}}[x] \rangle}) \doteq x$ equals $\bwdMain(\Pair{\Realizer{b}}{\langle x', e_{\Realizer{b}}[x'], e'_{\Realizer{b}}[x'] \rangle}) \doteq x'$.

  A simple calculation
  \begin{align*}
    &x \\
    &=\reasoning{x\text{ and }x''\text{ share the realizer }\Realizer{a}} \\
    &x'' \\
    &=\reasoning{x''\text{ and }x'\text{ share the realizer }\Realizer{b}} \\
    &x'
  \end{align*}
  suffices to show this.
\end{proof}

\begin{construction}[\formalisation{https://rahulc29.github.io/realizability/Realizability.Modest.SubQuotientCanonicalPERIso.html\#2560}]
  We have a map obtained by applying set-quotient recursion to upgrade the map $\bwdMain$ to a full-fledged map $\bwd : \subQuot{\canonicalPER{M}} \to M$.
\end{construction}

All that remains to check is that \bwd~ is tracked.

\begin{lemma}[\formalisation{https://rahulc29.github.io/realizability/Realizability.Modest.SubQuotientCanonicalPERIso.html\#2560}]\label{lemma:bwd-realizability}
  If \Realizer{a} : \PCA~ realizes $q : \subQuot{\canonicalPER{M}}$, then it also realizes $\bwd(q)$.
\end{lemma}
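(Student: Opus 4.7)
The plan is to use the proposition-level elimination principle for set-quotients to reduce $q$ to an explicit representative, and then rely on the modesty of $M$ to identify realizers.

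First, observe that the goal $\Realizer{a} \Vdash_M \bwd(q)$ is a proposition, since the realizability relation of $M$ is proposition-valued. This means we can apply the proposition-level elimination principle for set-quotients to $q$: it suffices to show the statement when $q$ is of the form $[\Pair{\Realizer{r}}{\langle x, e_{\Realizer{r}}[x], e'_{\Realizer{r}}[x] \rangle}]$ for some $\Pair{\Realizer{r}}{\langle x, e_{\Realizer{r}}[x], e'_{\Realizer{r}}[x] \rangle} : \Dom{\canonicalPER{M}}$.

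Under this assumption, $\bwd(q)$ reduces by definition to $x$, so our goal becomes $\Realizer{a} \Vdash_M x$. Unfolding the subquotient realizability relation [Construction~\ref{def:realizability-for-subquotient}], the hypothesis $\Realizer{a} \Vdash_{\subQuot{\canonicalPER{M}}} q$ reduces to $\Realizer{a} \approx_{\canonicalPER{M}} \Realizer{r}$. Unfolding the definition of \canonicalPER{M}, this gives us some $x' : M$ together with evidence $e_{\Realizer{a}}[x'] : \Realizer{a} \Vdash_M x'$ and $e_{\Realizer{r}}[x'] : \Realizer{r} \Vdash_M x'$.

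Now we can close the argument by appealing to modesty. Both $x$ and $x'$ share the realizer $\Realizer{r}$ (via $e_{\Realizer{r}}[x]$ and $e_{\Realizer{r}}[x']$), so by the modesty of $M$ we have $x = x'$. Transporting $e_{\Realizer{a}}[x']$ along this equality yields $\Realizer{a} \Vdash_M x \doteq \bwd(q)$, as required. No step here is particularly difficult; the only bit of care is recognising at the outset that the goal is propositional so that the set-quotient elimination is legitimate.
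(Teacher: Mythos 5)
Your proof is correct and follows essentially the same route as the paper's: proposition-level set-quotient elimination on $q$, unfolding the subquotient realizability to $\Realizer{a} \approx_{\canonicalPER{M}} \Realizer{r}$, and then using the shared realizer $\Realizer{r}$ together with modesty to identify $x$ and $x'$ and transport the realizability evidence. The explicit remark that the goal is propositional (so the elimination is legitimate) is a nice touch the paper leaves implicit.
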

\begin{proof}
  By the proposition-level set-quotient elimination principle on $q$, it is enough to show the lemma when we have a representative for $q$. Let $q$ be represented by \Pair{\Realizer{q}}{\Triple{x}{e_{\Realizer{q}}[x]}{e'_{\Realizer{q}}}}. We need to show that if \Realizer{a} : \PCA~ realizes $[\Pair{\Realizer{q}}{\Triple{x}{e_{\Realizer{q}}[x]}{e'_{\Realizer{q}}[x]}}] : \subQuot{\canonicalPER{M}}$ then it also realizes $\bwd([\Pair{\Realizer{q}}{\Triple{x}{e_{\Realizer{q}}[x]}{e'_{\Realizer{q}}[x]}}]) \doteq \bwdMain(\Pair{\Realizer{q}}{\Triple{x}{e_{\Realizer{q}}[x]}{e'_{\Realizer{q}}}}) \doteq x$.

  Since \Realizer{a} realizes $[\Pair{\Realizer{q}}{\Triple{x}{e_{\Realizer{q}}[x]}{e'_{\Realizer{q}}[x]}}]$, we have that $\Realizer{a} \approx_{\canonicalPER{M}} \Realizer{q}$ holds, which in turn means that we have an $x' : M$ realized by both \Realizer{a} and \Realizer{q}. Now, $x$ and $x'$ share a realizer in \Realizer{q}, so they are equal. Because \Realizer{a} already realizes $x'$ and $x'$ equals $x$, \Realizer{a} also realizes $x$. This concludes the proof.
\end{proof}

\begin{lemma}[\formalisation{https://rahulc29.github.io/realizability/Realizability.Modest.SubQuotientCanonicalPERIso.html\#2560}]
  The map $\bwd : \subQuot{\canonicalPER{M}} \to M$ is tracked and is thus an assembly morphism.
\end{lemma}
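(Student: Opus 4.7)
The plan is to exhibit a concrete tracker for $\bwd$ and then invoke Lemma~\ref{lemma:bwd-realizability} to verify it works. The natural candidate is the identity combinator $\Realizer{i}$ of $\PCA$, since Lemma~\ref{lemma:bwd-realizability} already tells us that any realizer of $q : \subQuot{\canonicalPER{M}}$ is also a realizer of $\bwd(q)$. Morally, this is saying that $\bwd$ acts as the ``identity'' on underlying realizers --- it just forgets the quotient structure and picks out the $M$-component that was already witnessed by the realizer.

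More concretely, I would proceed as follows. First, claim $\Realizer{i}$ is a tracker: suppose $\Realizer{a} : \PCA$ realizes some $q : \subQuot{\canonicalPER{M}}$. The goal is to show that $\Realizer{i \; a}$ realizes $\bwd(q)$ in $M$. Since $\Realizer{i \; a} = \Realizer{a}$ holds by the defining equation of the identity combinator (derivable from the \Realizer{s} and \Realizer{k} combinators in the usual way), the goal reduces to showing $\Realizer{a} \Vdash_M \bwd(q)$. But this is precisely the content of Lemma~\ref{lemma:bwd-realizability}. Finally, existence of a tracker is a mere proposition, so we can package $\Realizer{i}$ together with this verification to conclude that $\bwd$ is tracked.

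There is no real obstacle here --- the construction is essentially immediate once Lemma~\ref{lemma:bwd-realizability} is in hand, which is why the lemma was set up in that particular form. The only minor care needed is to note that we do not have to perform any further elimination on $q$ at this stage, because Lemma~\ref{lemma:bwd-realizability} already handles the quotient via its own proposition-level set-quotient elimination on $q$.
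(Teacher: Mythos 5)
Your proof is correct and takes exactly the same route as the paper: exhibit the identity combinator $\Realizer{i}$ as a tracker, reduce via $\Realizer{i \; a} = \Realizer{a}$, and conclude by Lemma~\ref{lemma:bwd-realizability}. Your closing remark that no further elimination on $q$ is needed here, since the preceding lemma already performed the set-quotient elimination, is also accurate.
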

\begin{proof}
  We claim that \Realizer{i} is a tracker, that is, for any $q : \subQuot{\canonicalPER{M}}$ and \Realizer{a} that realizes $q$ we have that \Realizer{i \; a} realizes $\bwd(q)$ holds. Because \Realizer{i \; a} is equal to \Realizer{a}, our goal is reduced to showing that \Realizer{a} realizes $\bwd(q)$; at which point we can simply apply the previous lemma [Lemma~\ref{lemma:bwd-realizability}].
\end{proof}

\subsection{Isomorphism Coherences}

Our final goal is to prove that $\bwd \; \circ \; \fwd = \id_{M}$ and $\fwd \circ \bwd = \id_{\subQuot{\canonicalPER{M}}}$ hold.

Since assembly morphisms form a set, both of these types are propositions. As such, we can apply the proposition-level elimination principles for set-quotients and propositional truncations to establish these coherences.

\begin{lemma}[\formalisation{https://rahulc29.github.io/realizability/Realizability.Modest.SubQuotientCanonicalPERIso.html\#6403}]\label{lemma:fwd-bwd-equals-id}
  We have that $\fwd \circ \bwd = \id_{\subQuot{\canonicalPER{M}}}$ holds.
\end{lemma}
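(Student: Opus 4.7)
The plan is to reduce the goal pointwise using assembly morphism extensionality [Lemma~\ref{lemma:assembly-morphism-extensionality}], and then to chase through the definitions of \fwd~ and \bwd~ by peeling off the relevant truncations and set-quotients. Since assembly morphisms form a set, everything we need to show will be a proposition, so the proposition-level elimination principles for set-quotients and propositional truncation are at our disposal.

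First, by assembly morphism extensionality, it suffices to show that for every $q : \subQuot{\canonicalPER{M}}$, $\fwd(\bwd(q)) = q$ holds. Since this is a proposition, I would apply the proposition-level set-quotient elimination on $q$ and assume that $q$ is represented by some $\Pair{\Realizer{q}}{\Triple{x}{e_{\Realizer{q}}[x]}{e'_{\Realizer{q}}[x]}} : \Dom{\canonicalPER{M}}$. Under this representation, by the definition of \bwdMain, $\bwd(q)$ definitionally reduces to $x$, so the goal becomes $\fwd(x) = [\Pair{\Realizer{q}}{\Triple{x}{e_{\Realizer{q}}[x]}{e'_{\Realizer{q}}[x]}}]$.

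To unfold $\fwd(x)$, recall it is defined by recursion on the witness $e_x$ that $x$ has a realizer. Since the goal is propositional, I would apply propositional truncation elimination on $e_x$ and assume $e_x$ is of the form $\ptInc{\Pair{\Realizer{a}}{e_{\Realizer{a}}[x]}}$ for some actual realizer $\Realizer{a}$ of $x$. Then, by the definitional computation of \fwd~ noted just after [Construction~\ref{constr:fwd-underlying}], $\fwd(x)$ reduces to $[\langle \Realizer{a}, \langle x, e_{\Realizer{a}}[x], e_{\Realizer{a}}[x] \rangle \rangle]$. The goal now becomes an equality of partial equivalence classes, which further reduces to showing $\Realizer{a} \approx_{\canonicalPER{M}} \Realizer{q}$.

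The final step is to produce a witness of $\Realizer{a} \approx_{\canonicalPER{M}} \Realizer{q}$, that is, an element of $M$ realized by both \Realizer{a} and \Realizer{q}. The obvious choice is $x$ itself: \Realizer{a} realizes $x$ by assumption ($e_{\Realizer{a}}[x]$) and \Realizer{q} realizes $x$ by the component $e_{\Realizer{q}}[x]$ of the original triple. No real obstacle arises here — the proof is essentially bureaucratic manipulation of eliminators, and the only thing to be careful about is ensuring that the reductions used are actually definitional, so that the propositional-level eliminators can discharge the equality cleanly.
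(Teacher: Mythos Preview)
Your proposal is correct and follows essentially the same approach as the paper's own proof: reduce to pointwise equality via assembly morphism extensionality, eliminate the set-quotient to get a representative $\Pair{\Realizer{q}}{\Triple{x}{e_{\Realizer{q}}[x]}{e'_{\Realizer{q}}[x]}}$, eliminate the propositional truncation on $e_x$ to expose a concrete realizer of $x$, and then discharge the resulting equality of partial equivalence classes by exhibiting $x$ as the shared element realized by both \Realizer{a} and \Realizer{q}. The only difference is cosmetic (the paper names the realizer \Realizer{p} instead of \Realizer{a}).
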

\begin{proof}
  By assembly morphism extensionality [Lemma~\ref{lemma:assembly-morphism-extensionality}] and proposition-level elimination rule for set quotients, it suffices to show that for any $\Pair{\Realizer{q}}{\Triple{x}{e_{\Realizer{q}}[x]}{e'_{\Realizer{q}}[x]}} : \Dom{\canonicalPER{M}}$, $\fwd (\bwd([ \Pair{\Realizer{q}}{\Triple{x}{e_{\Realizer{q}}[x]}{e'_{\Realizer{q}}[x]}} ]))$ equals $[ \Pair{\Realizer{q}}{\Triple{x}{e_{\Realizer{q}}[x]}{e'_{\Realizer{q}}[x]}} ]$.

Recall that \fwd~ is defined by recursion on the witness $e_x : \exists (\Realizer{a} : \PCA) \; \Realizer{a} \Vdash_M x$ that there exists a realizer for $x$. By the elimination rule for propositional truncations, we can assume that $e_x$ is of the form $\ptInc{\Pair{\Realizer{p}}{e_{\Realizer{p}}[x]}}$, where \Realizer{p} is a realizer and $e_{\Realizer{p}}[x]$ witnesses that $\Realizer{p} \Vdash_M x$ holds.

  We calculate
  \begin{align*}
    &\fwd (\bwd([ \Pair{\Realizer{q}}{\Triple{x}{e_{\Realizer{q}}[x]}{e'_{\Realizer{q}}[x]}} ])) \\ 
    &\doteq \reasoning{\text{by definition of }\bwd} \\
    &\fwd (\bwdMain(\Pair{\Realizer{q}}{\Triple{x}{e_{\Realizer{q}}[x]}{e'_{\Realizer{q}}[x]}}) \\
    &\doteq \reasoning{\text{by definition of }\bwdMain} \\
    &\fwd (x) \\
    &\doteq \reasoning{e_x\text{ is of the form }\ptInc{\Pair{\Realizer{p}}{e_{\Realizer{p}}[x]}}} \\
    &\fwdMain (\Pair{\Realizer{p}}{e_{\Realizer{p}}[x]}) \\
    &\doteq \reasoning{\text{by definition of }\fwdMain} \\
    &[ \Pair{\Realizer{p}}{\langle x ,  e_{\Realizer{p}}[x],  e_{\Realizer{p}}[x] \rangle} ]
  \end{align*}
  to refine our goal to showing that $[ \Pair{\Realizer{p}}{\langle x ,  e_{\Realizer{p}}[x],  e_{\Realizer{p}}[x] \rangle} ] = [ \Pair{\Realizer{q}}{\langle x , e_{\Realizer{q}}[x] , e'_{\Realizer{q}}[x] \rangle} ]$ holds. This amounts to showing that
  $\Realizer{p} \approx_{\canonicalPER{M}} \Realizer{q}$ holds, which is obviously true since $x$ is realized by both \Realizer{p} and \Realizer{q}.
\end{proof}

\begin{lemma}[\formalisation{https://rahulc29.github.io/realizability/Realizability.Modest.SubQuotientCanonicalPERIso.html\#5855}]\label{lemma:bwd-fwd-equals-id}
  We have that $\bwd \circ \fwd = \id_{M}$ holds.
\end{lemma}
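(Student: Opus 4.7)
The plan is to reduce the goal to a pointwise equality of underlying functions via assembly morphism extensionality (Lemma~\ref{lemma:assembly-morphism-extensionality}), and then unfold the definitions of $\fwd$ and $\bwd$ on a general element $x : M$. Since $M$ is a set, the resulting goal $\bwd(\fwd(x)) = x$ is a proposition, so we have free access to the proposition-level elimination principles of propositional truncation and of the set quotient wherever we need them.

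First I would fix an arbitrary $x : M$. Because $M$ is an assembly, we have a witness $e_x : \exists (\Realizer{a} : \PCA) \; \Realizer{a} \Vdash_M x$, and $\fwd(x)$ is defined by recursion on this very $e_x$ (Construction~\ref{constr:fwd-underlying}). The goal $\bwd(\fwd(x)) = x$ is a proposition, so by the propositional truncation elimination principle it suffices to establish it under the assumption that $e_x$ is of the form $\ptInc{\Pair{\Realizer{a}}{e_{\Realizer{a}}[x]}}$ for some realizer $\Realizer{a}$ and witness $e_{\Realizer{a}}[x] : \Realizer{a} \Vdash_M x$.

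Under that assumption, the proof is then pure definitional unfolding. We have
\begin{align*}
  &\bwd(\fwd(x)) \\
  &\doteq \reasoning{e_x\text{ is of the form }\ptInc{\Pair{\Realizer{a}}{e_{\Realizer{a}}[x]}}} \\
  &\bwd(\fwdMain_x(\Pair{\Realizer{a}}{e_{\Realizer{a}}[x]})) \\
  &\doteq \reasoning{\text{by definition of }\fwdMain} \\
  &\bwd([\langle \Realizer{a} , \langle x, e_{\Realizer{a}}[x], e_{\Realizer{a}}[x] \rangle \rangle]) \\
  &\doteq \reasoning{\text{by definition of }\bwd} \\
  &\bwdMain(\langle \Realizer{a} , \langle x, e_{\Realizer{a}}[x], e_{\Realizer{a}}[x] \rangle \rangle) \\
  &\doteq \reasoning{\text{by definition of }\bwdMain} \\
  &x
\end{align*}
so the desired equality holds on the nose.

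I do not expect any real obstacle here; the only delicate point is recognising that $\fwd$ is not definitionally computable on an arbitrary $x : M$ because it is defined by recursion on a propositional truncation, which is why the propositional truncation elimination step is necessary before the chain of definitional reductions can fire. Once a concrete representative for $e_x$ is in hand, the composite $\bwd \circ \fwd$ reduces definitionally to the identity, mirroring the structure of the proof of Lemma~\ref{lemma:fwd-bwd-equals-id} but one step shorter since no quotient reasoning is required at the end.
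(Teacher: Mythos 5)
Your proposal is correct and matches the paper's own proof essentially step for step: reduce to pointwise equality by assembly morphism extensionality, eliminate the propositional truncation witnessing that $x$ has a realizer, and then let the composite compute definitionally to $x$. You also correctly identify the one delicate point, namely that the truncation elimination must happen before the chain of judgemental reductions can fire.
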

\begin{proof}
  Similar to the previous lemma [Lemma~\ref{lemma:fwd-bwd-equals-id}], it suffices to show pointwise equality for the underlying functions on both sides. Our goal, then, is to show that for any $x : M$, $\bwd (\fwd(x)) = x$ holds.

  By the proposition-level elimination principle of the propositional truncation, we can assume that the witness $e_{x} : \exists (\Realizer{a} : \PCA) \; \Realizer{a} \Vdash_M x$ is of the form $\ptInc{\Pair{\Realizer{x}}{e_{\Realizer{x}}[x]}}$.

  We calculate
  \begin{align*}
    &\bwd (\fwd(x)) \\
    &\doteq \reasoning{\text{by definition of }\fwd} \\
    &\bwd (\fwdMain(\Pair{\Realizer{x}}{e_{\Realizer{x}}[x]}) \\
    &\doteq \reasoning{\text{by definition of }\fwdMain} \\
    &\bwd ([\Pair{\Realizer{x}}{\langle x, e_{\Realizer{x}}[x], e_{\Realizer{x}}[x] \rangle}]) \\
    &\doteq \reasoning{\text{by definition of }\bwd} \\
    &\bwdMain (\Pair{\Realizer{x}}{\langle x, e_{\Realizer{x}}[x], e_{\Realizer{x}}[x] \rangle}) \\
    &\doteq \reasoning{\text{by definition of }\bwdMain} \\
    &x
  \end{align*}
  to see that the intended equality holds by definition!
\end{proof}

The coherences [Lemma~\ref{lemma:fwd-bwd-equals-id}] and [Lemma~\ref{lemma:bwd-fwd-equals-id}] together imply :
\begin{theorem}[\formalisation{https://rahulc29.github.io/realizability/Realizability.Modest.SubQuotientCanonicalPERIso.html\#5008}]\label{theorem:subquotient-split-essentially-surjective}
  For any modest set $M$, we have an isomorphism $M \cong \\ \subQuot{\canonicalPER{M}}$.
  In other words, the subquotient functor $\subQuot{\_} : \PER \to \Mod$ is a \textbf{split essentially surjective} functor.
\end{theorem}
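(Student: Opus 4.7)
The plan is to package together the work that has already been done in the preceding subsections. At this point we have the canonical per $\canonicalPER{M}$ associated to every modest set $M$, together with assembly morphisms $\fwd : M \to \subQuot{\canonicalPER{M}}$ and $\bwd : \subQuot{\canonicalPER{M}} \to M$. The two coherence lemmas [Lemma~\ref{lemma:fwd-bwd-equals-id}] and [Lemma~\ref{lemma:bwd-fwd-equals-id}] say exactly that these morphisms are mutually inverse, so that the isomorphism $M \cong \subQuot{\canonicalPER{M}}$ follows by applying the definition of an isomorphism in a category.

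More concretely, I would first recall that an isomorphism in \Mod{} (equivalently, in \Asm{}) between two objects $X$ and $Y$ consists of a pair of morphisms $X \to Y$ and $Y \to X$ whose composites (in both orders) are the respective identity morphisms. Instantiating this with $X \Define M$ and $Y \Define \subQuot{\canonicalPER{M}}$, I would take the morphism pair $\Pair{\fwd}{\bwd}$. The two required identities $\bwd \circ \fwd = \id_M$ and $\fwd \circ \bwd = \id_{\subQuot{\canonicalPER{M}}}$ are furnished by [Lemma~\ref{lemma:bwd-fwd-equals-id}] and [Lemma~\ref{lemma:fwd-bwd-equals-id}] respectively. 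This gives us the isomorphism $M \cong \subQuot{\canonicalPER{M}}$.

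For the ``in other words'' clause, I would then observe that split essential surjectivity of $\subQuot{\_}$ amounts to giving, for each object $M$ of \Mod, a choice of object $R$ of \PER~ together with an isomorphism $\subQuot{R} \cong M$. We take $R \Define \canonicalPER{M}$ and use the isomorphism we have just produced (inverted, if needed, to get the direction $\subQuot{\canonicalPER{M}} \cong M$; inverses of isomorphisms exist by the usual categorical reasoning). Since everything is performed uniformly in $M$ --- no existential quantifier needs to be eliminated --- the essential surjectivity witness is literally split, not merely propositionally so.

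There is no real obstacle remaining: the hard work has been front-loaded into the construction of \fwd{} and \bwd{} and the verification of the two composite equalities. The only thing to be careful about is that the theorem claims \emph{split} essential surjectivity (i.e., a genuine choice function $M \mapsto \canonicalPER{M}$ paired with a concrete isomorphism) rather than the weaker mere-existence version; since the definitions of both \canonicalPER{M} and of the morphisms \fwd, \bwd{} are entirely explicit, this stronger statement is immediate.
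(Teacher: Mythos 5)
Your proposal is correct and matches the paper's approach exactly: the paper likewise obtains the isomorphism by packaging the assembly morphisms $\fwd$ and $\bwd$ with the two coherence lemmas [Lemma~\ref{lemma:fwd-bwd-equals-id}] and [Lemma~\ref{lemma:bwd-fwd-equals-id}], and notes that the uniform construction $M \mapsto \canonicalPER{M}$ yields split (not merely propositional) essential surjectivity. Your additional remark about inverting the isomorphism to get the direction $\subQuot{\canonicalPER{M}} \cong M$ is a fine, harmless elaboration.
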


\section{Concluding Remarks}
\label{sec:conclusion}

Our hike is over, and it is time to enjoy the view. At long last, we can state and prove the theorem we had originally promised!

\begin{theorem}
  The subquotient functor is an \emph{equivalence} of categories \PER~ and \Mod.
\end{theorem}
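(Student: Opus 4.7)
The plan is to assemble the two main technical results of the paper into a single categorical equivalence. Theorem~\ref{theorem:subquotient-fully-faithful} establishes that $\subQuot{\_}$ is fully faithful in the sense that its action on hom-sets is an equivalence of types, while Theorem~\ref{theorem:subquotient-split-essentially-surjective} gives split essential surjectivity: every modest set $M$ comes equipped with an explicit per $\canonicalPER{M}$ together with an isomorphism $M \cong \subQuot{\canonicalPER{M}}$. In the HoTT/UF treatment of category theory, a functor that is fully faithful and \emph{split} essentially surjective is exactly an equivalence of (pre)categories, and no appeal to the axiom of choice is required.

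For a more hands-on argument, I would explicitly construct a pseudo-inverse functor $F : \Mod \to \PER$ by setting $F(M) \Define \canonicalPER{M}$ on objects. For a morphism $g : M \to N$ in $\Mod$, the composite $\fwd_N \circ g \circ \bwd_M$ is an assembly morphism $\subQuot{\canonicalPER{M}} \to \subQuot{\canonicalPER{N}}$; by full faithfulness we may define $F(g)$ as the unique per morphism $\canonicalPER{M} \to \canonicalPER{N}$ whose image under $\subQuot{\_}$ recovers this composite. Functoriality of $F$ is then immediate from the uniqueness granted by full faithfulness, applied to the composites $\id$ and $g_2 \circ g_1$.

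The natural isomorphism $\subQuot{\_} \circ F \cong \id_{\Mod}$ has components $\bwd_M$, with naturality built into the definition of $F$ on morphisms. For the other direction $F \circ \subQuot{\_} \cong \id_{\PER}$, I would invoke full faithfulness once more to produce, for each per $R$, a unique per isomorphism $\canonicalPER{\subQuot{R}} \cong R$ whose image under $\subQuot{\_}$ is $\bwd_{\subQuot{R}}$; this per isomorphism is automatically natural in $R$ by the same uniqueness argument.

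There is no serious obstacle remaining: the genuine mathematical content already lives in Theorem~\ref{theorem:subquotient-fully-faithful} and Theorem~\ref{theorem:subquotient-split-essentially-surjective}. The only thing to watch is that the hom-equivalence from full faithfulness interacts with the pointwise isomorphisms from split essential surjectivity to produce the two natural isomorphisms on the nose, but this falls out of the uniqueness aspect of full faithfulness at every step.
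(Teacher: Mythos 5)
Your first paragraph is exactly the paper's proof: it cites Theorem~\ref{theorem:subquotient-fully-faithful} and Theorem~\ref{theorem:subquotient-split-essentially-surjective} and appeals to the standard fact that a fully faithful, split essentially surjective functor is an equivalence of (pre)categories without any use of choice. The remainder of your proposal correctly unpacks that standard fact by constructing the pseudo-inverse via $\canonicalPER{\_}$, but it adds nothing beyond the general lemma the paper already invokes, so the two arguments coincide.
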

\begin{proof}
  The subquotient functor is both fully faithful [Theorem~\ref{theorem:subquotient-fully-faithful}] and split essentially surjective [Theorem~\ref{theorem:subquotient-split-essentially-surjective}].
\end{proof}

\clearpage
\printbibliography

\end{document}